\newcommand{\cm}{\chi _{\mathrm{min}}}
\newcommand{\Ci}{\mathscr{C}}
\newcommand{\R}{\mathbb{R}}
\newcommand{\C}{\mathbb{C}}
\newcommand{\Z}{\mathbb{Z}}
\newcommand{\supp}{\operatorname{supp}}
\newcommand{\vertiii}[1]{{\left\vert\kern-0.25ex\left\vert\kern-0.25ex\left\vert #1 
    \right\vert\kern-0.25ex\right\vert\kern-0.25ex\right\vert}}
\theoremstyle{plain}
\newtheorem{thm}{Theorem}
\newtheorem{lem}[thm]{Lemma}
\theoremstyle{definition}
\newtheorem{dfn}[thm]{Definition}
\theoremstyle{remark}
\newtheorem{remark}[thm]{Remark}
\numberwithin{equation}{section}
\date{}
\begin{document}
\title[Spectra of expanding maps on Besov spaces]
{Spectra of expanding maps on Besov spaces
}

\author{Yushi Nakano}
\address[Yushi Nakano]{Faculty of Engineering, Kitami Institute of Technology, Hokkaido,
090-8507, JAPAN}
\email{nakano@mail.kitami-it.ac.jp}

\author{Shota Sakamoto}
\address[Shota Sakamoto]{Graduate School of Human and Environmental Studies,
Kyoto University,
Kyoto, 606-8501, Japan} \email{sakamoto.shota.76r@st.kyoto-u.ac.jp}

\subjclass[2010]{Primary 37C30; Secondary 37D20}

\keywords{Transfer operator; Besov space; Expanding map}

\begin{abstract}
A typical approach to analysing statistical properties of expanding maps is to show spectral gaps of associated transfer operators in adapted function spaces. The classical function  spaces for this purpose  are   H\"older spaces and Sobolev spaces. Natural generalisations  of these spaces are Besov spaces, on which we show a spectral gap of transfer operators.
\end{abstract}

\maketitle

\section{Introduction}

Let $M$ be a  compact smooth Riemannian manifold endowed with the normalised Lebesgue measure $\mathrm{Leb} _M$, and  $f: M\to M$  a $\Ci ^r$ expanding map  with $r>1$. 
It is  well known  that individual trajectories of expanding maps  tend to have ``chaotic behaviour''. Therefore,
to analyse statistical properties of the  expanding map $f$,  it is typical to instead  study  how densities  of  points evolve under a so-called transfer operator $\mathcal L_{f,g}$ induced by  $f$ with a given $\Ci ^{\tilde r}$ weight function  $g:M\to \C$ with $0<\tilde r \leq r$ (the precise definition will be given below).
In his celebrated paper~\cite{Ruelle89}, 
Ruelle first showed a spectral gap of the transfer operator of expanding maps on the usual H\"{o}lder space $\Ci ^s (M)$ with $0<s\leq \tilde r$  (when $g$ is   real-valued and strictly positive), 
resulting in the demonstration of 
 the existence of a unique equilibrium state $\mu _{g}$ of $f$ at $g$ and exponential decay of correlation functions of any $\Ci ^s$ observables with respect to $\mu _g$. 
(The existence and uniqueness of equilibrium states for $\Ci ^r$ expanding maps had been proved   earlier in his monograph \cite
{Ruellebook} through a thermodynamic approach.)
Furthermore, the spectral gap of the transfer operator was used to investigate the dynamical zeta function \cite{Ruelle1989, Ruelle1990},  several limit theorems \cite{
AD2001,Gouezel2015} and  strong stochastic stability \cites{BY93, BY94}.
As another (deep) development,  Gundlach and Latushkin \cite{GL2003} obtained  an exact formula of  the essential spectral radius of the transfer operator on $\Ci ^s(M)$ in   thermodynamic expression (see also Remark \ref{rmk:5}). 

Recently, the transfer operator was shown to also have a spectral gap on  the Sobolev space $\mathscr W^{s,p}(M)$ in Baillif and Baladi~\cite{BaillifBaladi} (see also Faure~\cite{Faure} for the spectral gap 
via a semiclassical approach, and  Thomine~\cite{Thomine2011} for a spectral gap of the transfer operator of piecewise expanding maps on $\mathscr W^{s,p}(M)$)
and the ``little H\"{o}lder space'' $\Ci _*^{s}(M)$  in Baladi and Tsujii \cite{BaladiTsujii08b}.
Our goal in this paper is to show a spectral gap of the transfer operator on \emph{Besov spaces} $\mathscr B_{pq}^{s}(M)$, 
which are closely related with the previously-studied function spaces (see Remark \ref{rmk:3b}).
We also refer to \cites{Baladi2017,Baladi2017correction,Baladi2018} and references therein for recent development of Banach spaces adapted to (hyperbolic) dynamical systems.

Our method in the proof is a natural generalisation of  the best technology developed in Tsujii and Baladi \cite{BT2008a}. 
Our result gives an answer to Problem 2.40  in the monograph by Baladi \cite{Baladibook2}.


\subsection{Definitions and results}
Before precisely stating our main result,  we introduce some notation.
Recall that $M$ is a compact smooth Riemannian manifold and $f: M\rightarrow M$ is of class $\mathscr{C}^r$ with $r>1$. Let $f$ be an  \emph{expanding map}, i.e., there exist constants $C>0$ and $\lambda _0 >1$ such that $\vert Df^n(x) v\vert \geq C\lambda _0 ^n \vert v\vert $ for each $x\in M$ and $v\in T_xM$. 
Let us set the minimal Lyapunov exponent
\[
\cm =\lim _{n\to \infty}\frac{1}{n}\log 
\inf_{x\in M}\color{black}\inf _{\stackrel{v\in T_xM}{\vert v\vert =1}}  \vert Df^n(x)v\vert .
\]
Then  $\cm>0$ when $f$ is an expanding map. (For the properties of expanding maps, the reader is referred to \cite{Ruellebook, KH95, FU2010} e.g.) 

  Let $g$ be a complex-valued $\mathscr C^{\tilde r}$ function on $M$ with $0<\tilde r\leq r$. 
We assume a technical condition
\begin{equation}\label{eq:ta}
\tilde r\geq 1 \quad \text{or} \quad \tilde r \leq r-1.
\end{equation}
Note that \eqref{eq:ta}  is satisfied for an important application $g=\vert \det Df\vert ^{-1}$, and that if $r\geq 2$, then the condition \eqref{eq:ta} always holds 
(see also Remark \ref{rmk:5} for the condition).
With the notation $g^{(n)}(x) = \prod _{j=0}^{n-1}g(f^j(x))$,  we  set
\begin{align}\label{def: R(g)}
 R(g)= \lim_{n\rightarrow \infty} R_n(g) ^{1/n}, \quad
 R_n(g)=\Vert g^{(n)}  \det Df^n \Vert _{L^\infty} \quad \text{($n\geq 1$)}.
\end{align}
For each $0\leq s\leq \tilde r$, the 
transfer operator $\mathcal  L_{f,g}:\mathscr C^s(M) \to \mathscr C^s(M)$ of $f$ with a $\Ci^{\tilde r}$ weight function $g$ is defined by
\[
\mathcal  L_{f,g}u (x) = \sum _{f(y)=x} g(y) \cdot u (y)  ,\quad u \in \mathscr C^s(M).
\]
%
Standard references for transfer operators are \cites{Baladi, Baladibook2}. 
\begin{remark}\label{rmk:3c}
Denote by $r(A\vert _E)$ and $r_{\mathrm{ess} } (A\vert _E)$ the spectral radius and the essential spectral radius  of a bounded operator $A: E\to E$ on a Banach space $E$, respectively.
Due to Ruelle \cite{Ruelle89}, $r(\mathcal L_{f,g} \vert _{\mathscr C^s(M)})$ is  bounded by $\exp P_{\mathrm{top}} (\log \vert g\vert )$, and is equal to $\exp P_{\mathrm{top}} (\log g)$ when $g$ is real-valued and strictly positive (i.e., $\inf _{x\in M} g >0$), 
where   $P_{\mathrm{top}}(\phi)$ is the topological pressure of a continuous function $\phi :M\to \mathbb R$ 
(refer to \cite{Walters1982} for the definition of topological pressure).
It also follows from \cite[Lemma 2.16]{Baladibook2} that $R(g) \geq \exp P_{\mathrm{top}} (\log \vert g\vert )$.

Furthermore, in view of   \cite[Lemma A.1]{BT2008a} regarding coincidence of eigenvalues of abstract linear operators in different Banach spaces outside of the essential spectral radii, 
we get the following: 
If  $r_{\mathrm{ess}}(\mathcal L_{f,g} \vert _{\mathscr B^s_{pq}(M)}) < \exp P_{\mathrm{top}} (\log g)$ with  real-valued and strictly positive $g$,
then $r(\mathcal L_{f,g}\vert _{\mathscr B^s_{pq}(M)} )=\exp P_{\mathrm{top}} (\log g)$, where   $\mathscr B^s_{pq}(M)$ is the Besov space on $M$ (see below for definition).
In particular,  the transfer operator on $\mathscr B^s_{pq}(M)$ is \emph{quasi-compact} (has a spectral gap), which is our goal in this paper.
\end{remark}
$\mathcal  L_{f,g}$ can be extended to a bounded operator on $L^p(M)$ with $p\in [1,\infty]$. 
 (Since some Besov spaces do not coincide with the completion of $\Ci^s(M)$ with respect to its Besov norm, this extension would be necessary; see \cite[\S A.1]{Taylor}.) 
 Indeed, a change of variables shows
\begin{equation}\label{eq:tobasic}
\int \mathcal  L_{f,g} u \cdot \varphi d\mathrm{Leb} _M = \int u\cdot \varphi \circ f \cdot g\cdot \vert \det Df \vert d\mathrm{Leb} _M,
\end{equation}
for any $u\in L^\infty(M)$ and  $\varphi \in L^1(M)$. 
On the other hand, the operator 
$\varphi \mapsto \varphi \circ f \cdot g\cdot \vert \det Df\vert$  is bounded  on $L^{p^\prime}(M)$ with $1/p^\prime+1/p=1$: notice  that 
$
\Vert  \varphi \circ f \cdot g \det Df\Vert _{L^{p^\prime}}\leq R_1(g) \Vert \varphi \circ f\Vert _{L^{p'}}$ and that in the case $1\leq p' <\infty$,
\[
\Vert \varphi \circ f\Vert _{L^{p'}} = \left(\int \mathcal L_{f,\vert \det Df\vert ^{-1}} 1 \cdot \vert \varphi \vert ^{p'} d\mathrm{Leb} _M\right)^{1/p'}
\leq \Vert \mathcal L_{f,\vert \det Df\vert ^{-1}} 1\Vert _{L^{\infty}} ^{1/p'} \Vert \varphi \Vert _{L^{p'}}
\]
by virtue of \eqref{eq:tobasic}, while $\Vert \varphi \circ f\Vert _{L^{\infty}}\leq \Vert \varphi \Vert _{L^{\infty}}$.
 Therefore,  the transfer operator has a continuous extension to $L^p(M)$ by the duality \eqref{eq:tobasic}. 
The extension will also be denoted by $\mathcal L_{f,g}$.

Below we  define  Besov spaces associated with 
 a partition of unity of $M$.
We first recall the definition of  the Besov spaces on $\mathbb{R}^d$,
where $d$ is the dimension of $M$.
Let $\rho:\R\to\R$
 be a  $\Ci^\infty$ function such that 
\begin{align}\label{def: rho}
0\le \rho \le 1,\quad
\rho(t)=
\begin{cases}
1 \quad (t \le 1),\\
0 \quad (t \ge 2).
\end{cases}
\end{align}
For each nonnegative integer $n$, define 
radial functions $\psi_n \in \Ci ^\infty_0(\mathbb{R}^d, \R)$   by
\begin{align}\label{eq:0603b}
\psi_n(\xi)=
\begin{cases}
\rho (\vert \xi \vert) \quad &(n=0),\\
\rho(2^{-n}\vert \xi \vert)-\rho(2^{-n+1}\vert \xi \vert) \quad &(n\geq 1).
\end{cases}
\end{align}
 For a tempered distribution $u$ (that is, $u$ is in the dual space of the set of rapidly decreasing test functions), the operator $\Delta_n$ is given by $\Delta_n u= \mathcal{F}^{-1}[\psi_n \mathcal{F}u]$ with $n\geq 0$,
 where $\mathcal{F}$ is the Fourier transform. 
 Then we have $\sum_{n\ge 0} \Delta_n u =u$, called the Littlewood-Paley (dyadic) decomposition. 
 This decomposition was first employed in context of dynamical systems theory to analyse the spectra of transfer operators of Anosov diffeomorphisms by Baladi and Tsujii \cite{BaladiTsujii07}. They also applied the decomposition to spectral analysis of transfer operators of 
  expanding maps in the  little H\"{o}lder space $\Ci _*^{s}(M)$ in  a survey \cite[Subsection 3.2]{BaladiTsujii08b}, see Remark \ref{rmk:3b} for the definition of $\Ci ^s_*(M)$.

For $s\in \mathbb{R}$, $p,q\in [1, \infty]$, we define the Besov space $B^s_{pq}(\mathbb{R}^d)$
 as a set of tempered distributions 
 $u$ on $\mathbb{R}^d$ whose norm 
\begin{equation}
\Vert u\Vert_{B^s_{pq}} =
\begin{cases}
\Big( \sum_{n\ge 0} 2^{sqn}\Vert \Delta_n u\Vert_{L^p}^q\Big)^{1/q} \quad &(q<\infty),\\
\sup_{n\ge 0} 2^{sn}\Vert \Delta_n u\Vert_{L^p} \quad &(q=\infty)
\end{cases}
\end{equation}
is finite.
We remark that $u \in B^s_{pq}(\mathbb{R}^d)$ if and only if there are a constant $C>0$ and a nonnegative sequence $\{ c_n \} _{n\geq 0} \in \ell^q$ with $\Vert \{ c_n \} _{n\geq 0}\Vert_{\ell^q}\le 1$ such that 
\begin{equation}\label{eq:above3}
\Vert \Delta_n u\Vert_{L^p} \le C2^{-ns}c_n
\end{equation} 
for all  $n\geq 0$. 
Once we know $u \in B^s_{pq}(\mathbb{R}^d)$, we can take $C=\Vert u\Vert_{B^s_{pq}}$.

Since $M$ is compact, there are a finite open covering $\{V_i\}_{i=1}^I$ and a system of local charts $\{\kappa _i\}_{i=1}^I$ such that $\kappa_i: V_i \rightarrow U_i$ is of class $\Ci ^\infty$  with an open set  $U_i\subset \mathbb{R}^d$  for each $1\leq i\leq I$. Let $\{\phi_i\}_{i=1}^I$  be a family of  $\Ci ^\infty$ functions such that $\{ \phi _i\circ \kappa _i\}_{i=1}^I$ is a partition of unity of $M$ subordinate to the covering $\{V_i\}_{i=1}^I$, that is,    $\phi_i\circ \kappa _i$ is a $\Ci ^\infty$ function on $M$ with values in $[0,1] \subset \R $ whose  support   is contained in $V_i$ for each $1\leq i\leq I$ and $\sum _{i=1}^I \phi _i \circ \kappa _i(x) =1$ for all $x\in M$.
In this paper, the  support of a continuous function $\phi :M \to \R$  is defined as  the closure of $\{x \in M \mid  \phi (x) \neq 0\}$. 
\begin{dfn}
The Besov space $\mathscr B^s_{pq}(M)$  
on $M$ is the space of tempered distributions 
$u$ on $M$ whose norm
\begin{equation}\label{eq:mm1}
\Vert u\Vert _{\mathscr B^s_{pq}} = \sum _{1\leq i\leq I} \Vert \phi _i\cdot u \circ \kappa _i^{-1}\Vert _{B^s _{pq}} 
\end{equation}
is finite. 
This definition does not depend on the choice of charts or the partition of unity, see \cite{Triebel86}. 
\end{dfn}

Let $U \subset \mathbb{R}^d$ be a nonempty bounded open set.
We write 
$\Ci ^s(U)$, $L^p(U)$ and  $B^s_{pq}(U)$  for the subspace of $\Ci ^s(\R^d)$, $L^p(\R^d)$ and $B^s_{pq}(\R ^d)$, respectively, such that  the support  of each element in these spaces is included in $U$. 
Then we have $B^s_{pq}(U) \subset L^p(U)$ for $s>0$ and $p$, $q\in [1,\infty]$.\footnote{
Indeed, by H\"{o}lder's inequality, for $1\le q\le \infty$ it holds 
\begin{align*}
\Vert u\Vert_{L^p} &= \big\Vert \sum_{n\ge 0} \Delta_n u\big\Vert_{L^p}\le \sum_{n\ge 0} \Vert \Delta_n u\Vert_{L^p}\\
&\le \Big( \sum_{n\ge 0} 2^{sqn} \Vert \Delta_n u\Vert_{L^p}^q\Big)^{1/q} \Big( \sum_{n\ge 0} 2^{-sq' n}\Big)^{1/q'} = \big(1-2^{-sq'} \big)^{-1/q'} \Vert u\Vert_{B^s_{pq}},
\end{align*}
where $1/q+1/q'=1$ with a usual modification for $q=\infty$.
}
Hence,  from the argument following \eqref{eq:tobasic} it holds that $\mathcal L_{f,g} u$ is  in $L^p(M)$ for each  $u\in \mathscr B^s_{pq}(M)$  with $s>0$ and $p,q\in [1,\infty]$. 
In particular, $\mathcal L_{f,g} u$ is a tempered distribution and $\Vert \mathcal L_{f,g} u\Vert _{\mathscr B^s_{pq}}$  is well-defined (but possibly takes $+\infty$).

Here we provide our main theorem for an upper bound of the essential spectral radius of the transfer operator on $\mathscr B^s_{pq}(M)$, which concludes (due to   Remark \ref{rmk:3c}) a spectral gap of the transfer operator on $\mathscr B^s_{pq}(M)$ when   $g$ is real-valued and strictly positive and $s$ is sufficiently large:
\begin{thm}\label{thm: main}
Let $f :M \to M$ be a $\Ci^r$  expanding map  
with $r>1$, and $g:M\to \C$  a $\Ci^{\tilde r}$ function with $0<\tilde r\le r$ satisfying \eqref{eq:ta}. Let $s\in (0, \tilde r]$ and $p,q \in [1,\infty]$.
In addition, when $q<\infty$, assume that $s$ is strictly smaller than $\tilde r$. 
Then 
$\mathcal L_{f,g}$ can be extended to a bounded operator on $\mathscr B^s_{pq}(M)$, 
and it holds
\begin{equation}
r_{\mathrm{ess}} (\mathcal L _{f, g} \vert _{\mathscr B^s_{pq}(M)} ) \leq \exp (-s \chi _{\mathrm{min}}) \cdot R(g),
\end{equation}
where $R(g)$ is defined in \eqref{def: R(g)}.
\end{thm}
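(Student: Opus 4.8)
The plan is to follow the Hennion/Lasota--Yorke framework: establish a Lasota--Yorke (or "Doeblin--Fortet") inequality of the form $\|\mathcal L_{f,g}^n u\|_{\mathscr B^s_{pq}} \le A_n \|u\|_{\mathscr B^s_{pq}} + B_n \|u\|_{E}$ where $E$ is an auxiliary Banach space into which $\mathscr B^s_{pq}(M)$ embeds compactly (here $E = \mathscr B^{s'}_{pq}(M)$ for some $0 < s' < s$, or $E = L^p(M)$), with $\limsup_n A_n^{1/n} \le \exp(-s\cm) R(g)$. Combined with Hennion's theorem, this yields the claimed bound on $r_{\mathrm{ess}}$. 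First I would reduce to the case of a single chart and a localised operator: using the partition of unity $\{\phi_i\circ\kappa_i\}$, the subordinate structure, and the fact that $f$ expands, iterates $\mathcal L_{f,g}^n$ decompose into finitely many branch operators, each conjugated through charts to an operator on $B^s_{pq}(U)\subset B^s_{pq}(\R^d)$ of the form $u \mapsto (\text{weight}) \cdot (u\circ h)$ where $h$ is a local inverse branch of $f^n$, a contraction with contraction rate governed by $\cm$. The key estimate is then a bound on such a localised weighted composition operator acting on Littlewood--Paley blocks.

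The heart of the argument is a dyadic/frequency estimate: for the inverse-branch map $h$ with $\|Dh\| \lesssim \exp(-n\cm)$ (up to subexponential factors), the composition $u \mapsto u\circ h$ spreads the frequency support of $\Delta_m u$ into frequencies of size $\lesssim 2^m \exp(-n\cm)$, so $\Delta_k(\text{weight}\cdot (u\circ h))$ is controlled by $\Delta_m u$ for $m$ roughly $k + n\cm/\log 2$. Using \eqref{eq:above3}, the change-of-variables bound $\|u\circ h\|_{L^p} \le |\det Dh|^{1/p}_{L^\infty}\|u\|_{L^p}$ summed over branches (which produces exactly the factor $R_n(g)$ via \eqref{eq:tobasic} and the definition \eqref{def: R(g)}), and the smoothness of the weight (valid because $s\le\tilde r$, and strictly less when $q<\infty$, so that multiplication by $\Ci^{\tilde r}$ weights is bounded on $B^s_{pq}$ and "almost diagonal" in the Littlewood--Paley scale), one extracts the gain factor $2^{-sn\cm/\log 2}\cdot R_n(g) = \exp(-s n\cm) R_n(g)$ for the $\mathscr B^s_{pq}$-seminorm of the high-frequency part, while the low-frequency part (finitely many blocks depending on $n$) is absorbed into the auxiliary $L^p$- or $B^{s'}_{pq}$-norm. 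Taking $n$-th roots and $n\to\infty$ gives $A_n^{1/n}\to \exp(-s\cm)R(g)$. This step is a direct generalisation of the corresponding estimate in \cite{BT2008a}; the technical condition \eqref{eq:ta} enters precisely to guarantee that the composition and multiplication operations interact well with the weight regularity $\tilde r$ relative to the diffeomorphism regularity $r$.

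The main obstacle I expect is the bookkeeping in the dyadic estimate when $s$ is close to $\tilde r$ and $p\neq 2$, $q$ arbitrary: unlike the Hilbert-space ($L^2$-based Sobolev) setting, here one cannot use orthogonality of Littlewood--Paley blocks, so the cross-terms between $\Delta_k(\text{weight}\cdot(u\circ h))$ and the blocks $\Delta_m u$ must be summed using only $\ell^q$-summability and an almost-orthogonality/almost-diagonality lemma for the weighted composition operator in the $B^s_{pq}$-scale — essentially a Schur-test argument on the matrix indexed by $(k,m)$. Controlling the off-diagonal decay of this matrix uniformly in $n$ (with the correct exponential gain) is where the regularity hypotheses $s<\tilde r$ (for $q<\infty$) and \eqref{eq:ta} are genuinely needed, and it is the most delicate part; the rest — the reduction to charts, the compact embedding $\mathscr B^s_{pq}\hookrightarrow\hookrightarrow\mathscr B^{s'}_{pq}$, the identification of the branch sum with $R_n(g)$, and the final application of Hennion's theorem — is comparatively routine.
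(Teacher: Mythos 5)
Your plan coincides with the paper's proof: a Lasota--Yorke inequality for chart-localised branch operators obtained by splitting the Littlewood--Paley blocks of $\mathcal L^n_{f,g}u$ into frequency-compatible pairs (handled by the $L^p$-bound on the branch sum, which produces $R_n(g)\exp(-sn\cm)$ up to a constant via Young's inequality on $\Z$) and incompatible pairs (the "off-diagonal" part), followed by the compact embedding $\mathscr B^s_{pq}\hookrightarrow\mathscr B^{\sigma}_{pq}$ and Hennion's theorem. The one ingredient you flag as delicate but do not supply is exactly the paper's key Lemma (proved in the appendix by repeated and regularised integration by parts in an oscillatory-integral kernel), giving the decay $2^{-\tilde r\max\{n,\ell\}}$ (or $2^{\min\{n,\ell\}-\tilde r\max\{n,\ell\}}$) for the incompatible pairs; this is where the hypotheses \eqref{eq:ta} and $s<\tilde r$ for $q<\infty$ enter, just as you anticipated.
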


\begin{remark}\label{rmk:3b}
The Besov spaces can represent many other  function spaces. As an important example, $B^s_{\infty\infty}(\mathbb{R}^d)$  is known to coincide with the H\"{o}lder space $C ^s(\mathbb{R}^d)$ when $s$ is not an integer (whereas $B^s_{\infty \infty}(\R ^d)$ strictly includes $C^s (\R^d)$ when $s$ is an integer), see \cite[\S A.1]{Taylor}. 
Hence Theorem \ref{thm: main}  is a generalisation  of a well-known result by Ruelle \cite[Theorem 3.2]{Ruelle89}. 
We note that the  \emph{little H\"{o}lder space} $\Ci _*^{s}(M)$ given in \cite{BaladiTsujii08b} is defined as the completion  of $\mathscr C^\infty (M)$ for the norm $\Vert \cdot \Vert _{\mathscr B^s_{\infty \infty}}$ (which is smaller than $\mathscr C^s (M)$, see \cite[\S A.1]{Taylor}),
so that one can see that the proof of Theorem \ref{thm: main} can be translated literally to the case when the functional space where the transfer operator acts is $\Ci _*^s(M)$ (this was essentially done in \cite[Theorem 3.1]{BaladiTsujii08b} with $\tilde r =r-1$).

Furthermore, for   the Sobolev  space $W^{s,p}(\mathbb{R}^d)$ with $s \in \mathbb{R}$ and $p \in (1,2]$, it holds $B^s_{pp}(\mathbb{R}^d)\subset W^{s,p}(\mathbb{R}^d)\subset B^s_{p2}(\mathbb{R}^d)$ and $B^s_{p'2}(\mathbb{R}^d)\subset W^{s,p'}(\mathbb{R}^d)\subset B^s_{p'p'}(\mathbb{R}^d)$, where $1/p+1/p' =1$  (if $p\neq 2$, these inclusions are strict; see \cite[p.161]{Stein70} for the cases when $s\in \mathbb{N}$). 
These inclusions imply $\mathscr W^{s,2}(M) =\mathscr B^s_{22}(M)$, thus Theorem \ref{thm: main} recovers a part of the result by Baillif and Baladi \cite{BaillifBaladi}.

Other famous functional spaces constructed by using the Littlewood-Paley decomposition are Triebel-Lizorkin spaces $F^s_{pq}(\R ^d)$. 
It seems possible  to prove Theorem \ref{thm: main} with $B^s_{pq}(\R ^d)$ replaced by $ F^s_{pq}(\R ^d)$ in the same manner as in the proof of Theorem \ref{thm: main}. 
We note that $W^{s,p}(\R ^d)=F^s_{p2}(\R) $ for each $1<p<\infty$, see \cite{Triebel1973}.
\end{remark}

\begin{remark}\label{rmk:6}
Theorem \ref{thm: main} is far from optimal: Gundlach and Latushkin \cite{GL2003} showed that $r_{\mathrm{ess}} (\mathcal L_{f,g} \vert _{\Ci ^s(M)})$ is \emph{equal to}
\begin{equation}\label{eq:rev4}
\sup _{\mu \in \mathrm{Erg}  (f)} \exp  \left( -s \chi _ \mu ^- + h_\mu  + \int \log \vert g\vert d\mu  \right) ,
\end{equation}
where $\mathrm{Erg} (f)$ is the set of ergodic $f$-invariant probability measures, $\chi _ \mu ^-$ is the \emph{smallest Lyapunov exponent} of $\mu$ (for  $Df$ over $f$), and $h_\mu$ is the \emph{Kolmogorov-Sinai entropy} of $\mu$ (over $f$).  We refer to   \cites{Walters1982,Viana2014} for the definitions of Lyapunov exponents and  Kolmogorov-Sinai entropy.
By the variation principle for topological pressure (see \cite[\S 9]{Walters1982} e.g.), we have 
\[
P_{\mathrm{top} } (\phi ) = \sup _{\mu \in \mathrm{Erg}  (f)}   \left(  h_\mu  + \int \phi  d\mu  \right) ,
\]
so that, since $\chi ^-_\mu \geq \chi _{\mathrm{min}}$ for any $\mu \in \mathrm{Erg} (f)$, 
\eqref{eq:rev4} is bounded by $\exp ( -s \chi _{\mathrm{min}} + P_{\mathrm{top}} (\log \vert g\vert ))$. Therefore, when $g$ is real-valued and strictly positive,  the transfer operator on $\Ci ^s(M)$ is quasi-compact for \emph{any} $0<s \leq \tilde r$, while our $s$ should be large to deduce the quasi-compactness from Theorem \ref{thm: main}.


We also give a remark on an exceptional case $g=\vert \det Df \vert ^{-1}$: If $g=\vert \det Df \vert ^{-1}$, then $R(g) =1$ and 
it follows from the Ruelle inequality and the Pesin identity (see e.g.~\cite{Mane2012}) that $\exp P_{\mathrm{top}} (\log \vert g \vert ) =1$.
Therefore, together with Remark \ref{rmk:3c}, Theorem \ref{thm: main} implies the quasi-compactness of $\mathcal L_{f,g}$ 
for any $0<s<\tilde r$.
\end{remark}

\begin{remark}\label{rmk:5}
It is a natural question whether one can get a Gundlach-Latushkin type formula of the essential spectral radius   on  $\mathscr B^{s}_{pq}(M)$, but we think that our method  does not work for this purpose. 
Roughly speaking, we avoided  the problem of exponential growth of the number of inverse branches of $f^n$  by a duality argument (see e.g.~\eqref{eq:rev7}), and  this made our proof a little simpler. 
Its main shortcoming is that it does not seem that one can use thermodynamic techniques in estimating the essential spectral radius at the final step of the proof, which might be essential to obtain a  Gundlach-Latushkin type formula.
Baladi \cite[Theorem 2.15]{Baladibook2} recently  showed a Gundlach-Latushkin type upper bound of the essential spectral radius of the transfer operator of expanding maps on the Sobolev space $\mathscr W^{s,p}(M)$ by using the  Littlewood-Paley decomposition, which may be helpful to answering this question.

Another shortcoming of our approach is the technical condition \eqref{eq:ta}  for the regularity of weight functions. 
This problem is also solved in the Sobolev case in the approach of \cite[Theorem 2.15]{Baladibook2} (see Remark \ref{rmk:8} also), and
 it is highly likely that one can remove the condition \eqref{eq:ta}  by a similar argument.
\end{remark}
%
%
%

\section{Proof of Theorem \ref{thm: main}}\label{section:proof}

\subsection{Transfer operators on $\R ^d$} 
We shall  deduce Theorem \ref{thm: main} from  a theorem for  transfer operators on Besov spaces of compactly  supported functions on $\R ^d$. 
Let $U$ and  $U^\prime$ be nonempty bounded open  subsets  of $\R^d$. 
Let $F$ be   a $\Ci ^r$ mapping on $\R ^d$ with $r>1$ such that $U'\cap F^{-1}(U) \neq \emptyset$ and
\begin{align}\label{def: lambda}
\lambda =  \min _{x\in U'\cap F^{-1}(U)} \min _{\stackrel{v\in \R^d}{\vert v\vert =1}}\left\vert DF(x) v\right\vert 
\end{align}
is larger than $1$.
Let $G$ be a $\Ci ^{\tilde r}$ function whose support is included in $U'\cap F ^{-1}(U)$.
Furthermore, we assume that there are finitely many compact subsets $\{ K_j\}_{j=1}^{N(F)}$ of $\R ^d$ such that 
\begin{equation}\label{eq:re7}
\text{$\mathrm{int} (K_i) \cap \mathrm{int} (K_j) =\emptyset$ when $i\neq j$,} \quad \text{and} \quad  \text{$\supp G = \cup _{j=1}^{N(F)} K_j$},
\end{equation} 
and for each $1\leq j \leq N(F)$ one can find a small neighbourhood $V_j$ of $K_j$ satisfying  that 
\begin{equation}\label{eq:re8}
\text{$F : V_j \to F(V_j)$  is a $\Ci ^{r}$ diffeomorphism.}
\end{equation}

Define a bounded operator $\mathbb L_{F,G}: \Ci ^s(\R ^d)\to \Ci ^s (U)$ by
\[
\mathbb L_{F,G}u(x) =
 \sum _{F(y)=x} G(y) \cdot u(y).
\] 
Then, in a  manner similar to one below \eqref{eq:tobasic}, 
  for each $p\in [1,\infty]$ we can extend $\mathbb L_{F,G}$ to a bounded operator  from $L^p(\R ^d)$ to $L^p(U)$. 
The only essential difference from the argument below \eqref{eq:tobasic} is  that in the case $1<p\leq \infty$, we need to notice 
\begin{align}\label{eq:rev7}
\Vert \varphi \circ F\cdot G\det DF\Vert _{L^{p'}}
&=\Vert 1_{U'\cap F^{-1}(U)}\cdot \varphi \circ F\cdot  G\det DF \Vert _{L^{p'}}\\\notag
&\leq \Vert  G \det DF  \Vert _{L^{\infty}} \Vert 1_{U'\cap F^{-1}(U)}\cdot \varphi \circ F \Vert _{L^{p'}} 
\end{align}
 with $1/p'+1/p=1$, where $1_{U'\cap F^{-1}(U)}$ is the indicator function of $U'\cap F^{-1}(U)$.
Furthermore, it follows from this argument that the operator norm of $\mathbb L_{F,G}: L^p(\R ^d)\to L^p (U)$ is bounded by a nonnegative number $ \alpha \equiv \alpha (F,G,p)$, given by
\begin{align}\label{def: alpha}
\quad\alpha = 
\begin{cases}
{\displaystyle\sup _{x\in F(U') \cap U} \left\vert \sum _{F(y)=(x )}  \frac{1}{\vert \det DF (y)\vert } \right\vert  ^{1/p'}}\left\Vert G \det DF\right\Vert _{L^\infty} & (1< p\leq \infty),\\
\left\Vert G \det DF\right\Vert _{L^\infty} & (p =1)
\end{cases}
\end{align}
with $1/p'+1/p=1$.

We define $C_1$ as the operator norm of $\Delta _n$ on $L^p(\R ^d)$,
\begin{align}\label{def: C_1}
C_1 = \sup_{\Vert u\Vert_{L^p}=1} \Vert \Delta_n u\Vert_{L^p}
\end{align}
(such $C_1$ is bounded and independent of $n$: see \cite[Remark 2.11]{BCD}).
Notice also that if we let $\varphi _0 \in \Ci ^\infty (U)$ such that $\varphi _0 \circ F \equiv 1$ on the support of $G$, then $\mathbb M : u \mapsto u\cdot \varphi _0$ is a bounded operator from  $B^{s}_{pq}(\R ^d)$ to $B^{s}_{pq}(U)$. Denote the operator norm of $\mathbb M$ by $\tilde C=\tilde C(s,p,q,\varphi _0)>0$.
For $s>0$, let
\begin{align}\label{def: gamma_s}
\tilde \gamma _s=C_1 2^{5s}(1-2^{-s})^{-1}\ \text{and}\ \gamma _s = \tilde C \tilde \gamma _s.
\end{align} 
Theorem \ref{thm: main} will follow from the next Lasota-Yorke type inequality. 
\begin{thm}\label{thm: decomposition}
Assume that $(s,p,q)$ satisfies the condition in Theorem \ref{thm: main}, and let $F$ and $G$ be as above. 
Then, $\mathbb L_{F,G}$ can be extended to a bounded operator from $B^s_{pq}(U')$ to $B^s_{pq}(U)$. 
Furthermore, there are a constant $\hat C_{F,G} >0$ and $\sigma \in (0,s)$ such that
\[
\Vert \mathbb L_{F,G} u \Vert _{B^s_{pq}} \leq \gamma _s\alpha  \lambda ^{-s}  \Vert u\Vert _{B^s_{pq}} + \hat C_{F,G} \Vert u \Vert _{B^\sigma _{pq}}
\]  
for all $u\in B^s_{pq}(U')$, 
where the constants are defined in \eqref{def: lambda}, \eqref{def: alpha}, and \eqref{def: gamma_s}.
\end{thm}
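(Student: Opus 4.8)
The plan is to prove the Lasota--Yorke inequality by decomposing $\mathbb L_{F,G}u$ into its Littlewood--Paley blocks, estimating each $\Vert \Delta_n(\mathbb L_{F,G}u)\Vert_{L^p}$, and summing (or taking the supremum) in the $\ell^q$-norm. First I would reduce to the case of a single diffeomorphic branch: since $\supp G = \cup_{j=1}^{N(F)} K_j$ with the $K_j$ having disjoint interiors, and $F$ is a $\Ci^r$ diffeomorphism on a neighbourhood $V_j$ of each $K_j$, I can write $\mathbb L_{F,G} = \sum_j \mathbb L_{F_j, G_j}$ where $F_j = F|_{V_j}$ and $G_j$ localises $G$ to $K_j$ (using a smooth partition of $\supp G$ adapted to the $K_j$; the overlap on boundaries has measure zero so the $L^p$-estimates are unaffected). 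For a single branch the transfer operator becomes a weighted composition operator $u \mapsto (G_j \cdot u)\circ F_j^{-1}$ pushed forward, which is the object one can analyse with Fourier-analytic tools.

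The core estimate is the ``high-frequency'' bound. For the block $\Delta_n(\mathbb L_{F,G}u)$ with $n$ large, I would insert the cutoff $\varphi_0$ (so that $\mathbb L_{F,G}u = \mathbb L_{F,G}(u) \cdot \varphi_0$ near the relevant support, justifying the use of $\mathbb M$ and the constant $\tilde C$), then further decompose $u = \sum_m \Delta_m u$ and study $\Delta_n \mathbb L_{F,G} \Delta_m u$. The key geometric fact is that composition with the expanding map $F_j$ (equivalently, $F_j^{-1}$ contracting by at least $\lambda^{-1}$) moves frequency support: the term $\Delta_n \mathbb L_{F,G}\Delta_m$ is negligible (with rapidly decaying operator norm, giving a convergent geometric series whose sum produces the factor $(1-2^{-s})^{-1}$ and the powers $2^{5s}$ in $\tilde\gamma_s$) unless $m$ and $n$ are comparable up to the distortion of $F$, roughly $m \geq n + \log_2 \lambda - O(1)$. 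On the diagonal band one uses: (i) the $L^p \to L^p$ bound $\alpha$ on $\mathbb L_{F,G}$ from \eqref{def: alpha}; (ii) the uniform $L^p$-boundedness $C_1$ of $\Delta_n$; and (iii) the change-of-variables/dilation heuristic that composing with $F_j^{-1}$ (contraction $\geq \lambda$) converts a frequency-$2^m$ function into (essentially) a frequency-$2^{m}\lambda^{-1}$ function, so that $\Vert\Delta_n u\Vert_{L^p} \lesssim 2^{-ms}c_m$ yields $\Vert \Delta_n \mathbb L_{F,G}u\Vert_{L^p} \lesssim \alpha C_1 \lambda^{-s} 2^{-ns} c_m$ for $m$ in the band around $n$. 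Collecting these, summing the geometric tails, and applying \eqref{eq:above3} gives $\Vert\mathbb L_{F,G}u\Vert_{B^s_{pq}} \leq \gamma_s \alpha \lambda^{-s}\Vert u\Vert_{B^s_{pq}} + (\text{lower-order})$.

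The ``lower-order'' remainder collects all the terms that were discarded as negligible in the high-frequency analysis together with the finitely many low-$n$ blocks; each of these involves either a genuinely smoothing operator or only finitely many frequencies, so each is bounded by $\Vert u\Vert_{B^\sigma_{pq}}$ for some $\sigma \in (0,s)$ — here is where I would use the regularity hypothesis on $G$ (that $G \in \Ci^{\tilde r}$ with $s \leq \tilde r$, and strictly $s < \tilde r$ when $q < \infty$, together with condition \eqref{eq:ta}) to gain a genuine fraction of a derivative: multiplication by $G$ and composition with $F \in \Ci^r$ are bounded on $B^\sigma_{pq}$ and map $B^s_{pq}$ compactly into $B^\sigma_{pq}$ precisely when $\sigma < s \leq \tilde r$. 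Finally, Theorem~\ref{thm: main} follows by patching the local charts: writing $\mathcal L_{f,g}$ in the coordinates $\{\kappa_i\}$ realises it as a finite sum of operators of the form $\mathbb L_{F,G}$ (one for each pair of charts linked by an inverse branch of $f$), applying Theorem~\ref{thm: decomposition} to each, controlling the constants $\alpha$ via \eqref{def: alpha}, using that $\alpha \lambda^{-s}$ over the iterates $f^n$ has exponential rate $\exp(-s\cm) R(g)$ by the definitions \eqref{def: R(g)} and \eqref{def: lambda} and the Lyapunov-exponent limit, and invoking the standard Hennion/Nussbaum quasi-compactness criterion (the embedding $\mathscr B^s_{pq}(M) \hookrightarrow \mathscr B^\sigma_{pq}(M)$ being compact) to convert the Lasota--Yorke inequality into the bound on $r_{\mathrm{ess}}$.

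I expect the main obstacle to be making the frequency-localisation heuristic rigorous in the presence of nonlinearity and nonconstant weight: the clean ``composition with $F_j^{-1}$ shifts frequency by $\lambda$'' picture is only literally true for linear $F_j$ and constant $G$, so one needs to control the error from the distortion of $F$ (the $\Ci^r$-norm of $F_j$ and its inverse entering the $O(1)$ losses and the $2^{5s}$ factors) and from differentiating the weight $G$, and to verify that these errors are absorbed into either the geometric tails or the compact remainder. This is exactly the technical heart of the Baladi--Tsujii method \cite{BT2008a} that we are generalising, and the Besov setting requires care in how the $\ell^q$-summation interacts with the off-diagonal decay — in particular the endpoint $q = \infty$ (where $s = \tilde r$ is allowed) versus $q < \infty$ (where the strict inequality $s < \tilde r$ is needed to get the compact remainder, since one loses an $\epsilon$ of summability).
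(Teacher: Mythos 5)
Your overall architecture is the same as the paper's: split each output block $\Delta_n \mathbb L_{F,G}u$ according to whether the input frequency $\ell$ lies in the band $\ell\hookrightarrow n$ (i.e.\ $2^n\le\lambda^{-1}2^{\ell+4}$) or not, get the principal term $\tilde\gamma_s\alpha\lambda^{-s}\Vert u\Vert_{B^s_{pq}}$ from the band using only the $L^p\to L^p$ bound $\alpha$, the uniform bound $C_1$ on $\Delta_n$, and a geometric/Young-type summation, and put everything off the band into the $\Vert u\Vert_{B^\sigma_{pq}}$ remainder. (A small misattribution: the factors $2^{5s}(1-2^{-s})^{-1}$ in $\tilde\gamma_s$ come from summing $2^{s(n-\ell)}$ \emph{over the band} $\ell\ge n+\log_2\lambda-4$, not from the discarded off-band terms.)

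The genuine gap is that the off-band estimate, which is the entire analytic content of the theorem, is never established: you defer it as ``the technical heart'' and offer in its place the claim that multiplication by $G\in\Ci^{\tilde r}$ and composition with $F\in\Ci^r$ are bounded on $B^\sigma_{pq}$ and map $B^s_{pq}$ compactly into $B^\sigma_{pq}$. That mechanism does not produce the remainder term: boundedness plus the compact embedding $B^s_{pq}(U')\hookrightarrow B^\sigma_{pq}(U')$ would at best show the off-band part is a bounded (even compact) operator, but without quantitative decay of $\Vert\Delta_n\mathbb L_{F,G}\tilde\Delta_\ell\Vert_{L^p\to L^p}$ in $\max\{n,\ell\}$ the sum $\sum_{\ell\not\hookrightarrow n}2^{sn}\Vert\Delta_n\mathbb L_{F,G}\Delta_\ell u\Vert_{L^p}$ is only controlled by $\Vert u\Vert_{B^s_{pq}}$ with a constant that destroys the Lasota--Yorke structure, not by $\Vert u\Vert_{B^\sigma_{pq}}$ with $\sigma<s$. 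What is actually needed is Lemma~\ref{L^p estimate}: $\Vert\Delta_n\mathbb L_{F,G}\tilde\Delta_\ell u\Vert_{L^p}\le C_{F,G}2^{-\tilde r\max\{n,\ell\}}\Vert u\Vert_{L^p}$ when $\tilde r\le r-1$, and $\le C_{F,G}2^{\min\{n,\ell\}-\tilde r\max\{n,\ell\}}\Vert u\Vert_{L^p}$ when $\tilde r\ge1$, proved by writing $\Delta_n\mathbb L_{F,G}\tilde\Delta_\ell$ as an integral operator with oscillatory kernel $V_n^\ell$ and performing $\tilde r$ (non-stationary phase) integrations by parts in $w$ --- regularised integration by parts when $\tilde r\notin\Z$, and the modified phase $\tilde\phi$ together with the operators $\mathcal R_0,\tilde{\mathcal T}$ when $\tilde r>r-1$, which is exactly where the hypothesis \eqref{eq:ta} and the extra loss $2^{\min\{n,\ell\}}$ enter. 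Everything you gesture at but cannot verify --- the choice of $\sigma$ as in \eqref{eq:conditionsigma}, why \eqref{eq:ta} is needed, and why $s<\tilde r$ is required for $q<\infty$ but $s=\tilde r$ is allowed for $q=\infty$ --- hinges on the precise exponents in this lemma, so without it the proof is incomplete at its central step.
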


\begin{remark}\label{rmk:8}
The proof of Theorem \ref{thm: decomposition} works even when $\lambda \leq 1$, although it may give no spectral information for the original dynamical system $f$ (in our setting \eqref{eq:ta} about $\tilde r$): the role of $F$ will played by $f^n$ in local chart and $\lambda \leq 1$ corresponds to $\chi _{\mathrm{min}} \leq 0$, so that Theorem \ref{thm: main} only means $r_{\mathrm{ess}} ( \mathcal L_{f,g} \vert _{\mathscr B^s_{pq} (M)})$ is bounded by $\exp (-s\chi _{\mathrm{min}}) \cdot R(g)$ with $\exp (-s\chi _{\mathrm{min}}) \geq 1$ and $R(g) \geq \exp P_{\mathrm{top}} (\log \vert g\vert )$ (see Remark \ref{rmk:3c}).
However yet, Theorem \ref{thm: decomposition} with $F=\mathrm{id}$ ($\lambda =1$) seems to be actually helpful to demonstrate a spectral gap 
with $\tilde r$ in full generality (that is, with $\tilde r$ being \emph{not} in the range of \eqref{eq:ta}), see \cite[Theorem 2.15]{Baladibook2} for the Sobolev case.
\end{remark}

\begin{proof}
We follow  \cite{BaladiTsujii08b} and \cite{Baladibook2}.
Let $u\in B^s_{pq}(U')$. 
It follows from \eqref{eq:above3} that  both $ \sum_{\ell:\ell\hookrightarrow n}\Delta _\ell u$ and $ \sum_{\ell:\ell \not \hookrightarrow n}\Delta _\ell u$ are in $L^p(\mathbb R^d)$ for any $n\geq 0$. 
Thus, if we define $\mathbb L_{0,n} u$ and $\mathbb L_{1,n} u$ with $n\geq 0$ by
\begin{align*}
 \mathbb L_{0,n}' u&= \sum_{\ell:\ell\hookrightarrow n} \Delta _n\mathbb{L}_{F,G}\Delta _\ell u,\quad   \mathbb L_{0,n} u = \mathbb M\circ \mathbb L_{0,n}' u,\\
   \mathbb L_{1,n} 'u &= \sum_{\ell:\ell\not\hookrightarrow n}  \Delta _n \mathbb{L}_{F,G} \Delta _\ell u , \quad   \mathbb L_{1,n} u = \mathbb M\circ \mathbb L_{1,n}' u,
\end{align*}
then both $\mathbb L_{0,n} u$ and $\mathbb L_{1,n} u$
are  in $L^p(U)$ (due to  \eqref{def: alpha} and \eqref{def: C_1}), 
where  the  summations are taken over nonnegative integers $\ell $ and we write $\ell \hookrightarrow n$ if $2^n \le \lambda^{-1} 2^{\ell +4}$ and $\ell \not\hookrightarrow n$ if $2^n > \lambda^{-1} 2^{\ell +4}$.
It is obvious from the fact $\sum _{\ell \geq 0} \Delta _\ell u =u$ that
\[
\Delta _n \mathbb L_{F,G} u =  \mathbb L_{0,n} u  +  \mathbb L_{1,n} u  ,
\]
so we have by Minkowski's inequality that 
\[
\Vert \mathbb L_{F, G} u \Vert _{B^s_{pq}} \leq \big( \sum _{n\geq 0} 2^{sqn} \Vert \mathbb L_{0,n} u  \Vert _{L^p} ^q \big) ^{1/q} +  \big( \sum _{n\geq 0} 2^{sqn} \Vert \mathbb L_{1,n} u  \Vert _{L^p} ^q \big) ^{1/q}
\]
in the case $q<\infty$ (a corresponding inequality also holds for $q=\infty$).
Therefore, it suffices to show the following two inequalities for each $q<\infty$ (and corresponding two inequalities for $q=\infty$):
\begin{equation}\label{eq:0602}
 \Big( \sum _{n\geq 0} 2^{sqn} \Vert \mathbb L_{0,n} 'u  \Vert _{L^p} ^q \Big) ^{1/q} 
 \leq \tilde \gamma _s \alpha \lambda ^{-s} \Vert u \Vert _{B^s_{pq}},
\end{equation}
and
\begin{equation}\label{eq:0602b}
\Big( \sum _{n\geq 0} 2^{sqn} \Vert \mathbb L_{1,n} 'u  \Vert _{L^p} ^q \Big) ^{1/q} \leq \bar C_{F,G} \Vert u \Vert _{B^\sigma _{pq}}
\end{equation}
with some constant $\bar C_{F,G} >0$ and $\sigma \in (0,s)$.

 
First we show \eqref{eq:0602}. We focus on the case when $q$ is finite, but the other case is analogous. 
Using \eqref{eq:above3}, \eqref{def: alpha} and \eqref{def: C_1}, we can find a nonnegative $\ell ^q$ sequence $\{c_\ell \}_{\ell \geq 0}$ satisfying $\Vert \{c_\ell \}_{\ell \geq 0}\Vert _{\ell ^q}\leq 1$ such  that  
\begin{multline}\label{eq:above22}
 \Big( \sum _{n\geq 0} 2^{sqn} \Vert \mathbb L_{0,n} 'u  \Vert _{L^p} ^q \Big) ^{1/q}  \leq  C_1 \alpha \Vert u\Vert_{B^s_{pq}} \Big( \sum_{n\ge 0} 2^{nsq}\Big( \sum_{\ell : \ell\hookrightarrow n}2^{-s\ell} c_\ell \Big)^q\Big)^{1/q}
 \\
  \leq C_1\alpha  \Vert u\Vert_{B^s_{pq}}  \Big( \sum_{n\geq 0} \Big(\sum_{\ell\ge 0}2^{s(n-\ell)} 1_{\{0\hookrightarrow (\cdot )\}}(n-\ell)c_\ell\Big)^q\Big)^{1/q},
\end{multline}
where $1_{\{ 0\hookrightarrow (\cdot )\}}$ is the indicator  function of $\{ 0\hookrightarrow (\cdot )\}=\{ \ell \in \Z \mid 0 \hookrightarrow \ell\}$.

To estimate it, we recall Young's inequality for the locally compact group $\Z$.
We define $\ell ^q(\Z )$ as the set of   (two-sided) nonnegative sequences $a_{(\cdot)} \equiv \{ a_\ell \}_{\ell \in \Z}$ such 
that $\Vert a_{(\cdot )} \Vert _{\ell ^q(\mathbb Z)} := \left(\sum _{\ell \in \Z} a_\ell ^q\right)^{1/q}$ is finite. Then, it follows from \cite[Lemma 1.4]{BCD} that $\Vert a _{(\cdot )}* b_{(\cdot)} \Vert _{\ell ^q(\Z)} \leq \Vert a _{(\cdot)} \Vert _{\ell ^1(\Z)} \Vert b _{(\cdot)} \Vert _{\ell ^q(\Z)}$ for all $a_{(\cdot)} \in \ell ^1(\Z)$ and $b_{(\cdot )}\in \ell ^q(\Z)$, where $a _{(\cdot )}* b_{(\cdot)} $ is the convolution of $a _{(\cdot )}$ and $b_{(\cdot)} $ given by $a _{(\cdot )}* b_{(\cdot)} (n ) = \sum _{\ell \in \mathbb Z} a_{n-\ell} b_{\ell}$ for $n\in \mathbb Z$.
Thus, if we let $\{ \tilde c_\ell \} _{\ell \in \Z}$ be as $\tilde c_\ell = c_\ell$ for $\ell \geq 0$ and $=0$ for $\ell <0$, then we have
\begin{multline}\label{eq: geological sequence}
  \sum_{n\ge 0} \Big(\sum_{\ell\ge 0}2^{s(n-\ell)} 1_{\{0\hookrightarrow (\cdot )\}}(n-\ell)c_\ell\Big)^q \\
  \leq 
 \sum_{n\in \Z} \Big(\sum_{\ell\in \Z}2^{s(n-\ell)} 1_{\{0\hookrightarrow (\cdot )\}}(n-\ell)\tilde c_\ell\Big)^q 
 =  \left\Vert \left(2^{s(\cdot)}\cdot  1_{\{0\hookrightarrow (\cdot )\}} \right) * \tilde c_{(\cdot)}  \right\Vert_{\ell^q(\Z)}^q\\
\le \left \Vert 2^{s(\cdot)}\cdot  1_{\{0\hookrightarrow (\cdot )\}} \right\Vert_{\ell^1(\Z)}^q \left\Vert \tilde  c_{(\cdot)} \right \Vert_{\ell^q(\Z)}^q
 \le  \left(\sum_{\ell \in \{ 0\hookrightarrow (\cdot ) \}} 2^{s\ell} \right)^q.
\end{multline}
On the other hand, since
\begin{equation}\label{eq:0602c}
\{ 0\hookrightarrow (\cdot) \}  =\{  \ell \in \Z \mid  \ell \leq 4-\log_2\lambda \},
\end{equation}
we have that
\begin{equation}\label{eq: geological sequence2}
\sum_{\ell \in \{ 0\hookrightarrow (\cdot ) \}} 2^{s\ell} \leq 2^{s(5-\log _2 \lambda )} \sum _{\ell \geq 0} 2^{-s\ell} =\frac{2^{5s}}{1-2^{-s}}\lambda^{-s}.
\end{equation}
By \eqref{eq:above22}, \eqref{eq: geological sequence} and \eqref{eq: geological sequence2}, 
 we obtain \eqref{eq:0602}.

Next, we shall show \eqref{eq:0602b}. 
%
Let $\{\tilde{\psi}_\ell \}_{\ell \geq 1}$ be a family of smooth functions on $\R ^d$ given by
\begin{equation*}
\tilde{\psi}_{n}(\xi) =  
\begin{cases} 
\rho (2^{-1}\vert \xi\vert)   \quad &(n=0),\\
\rho (2^{-n-1}\vert \xi\vert) -\rho (2^{-n+2}\vert \xi\vert) \quad & (n\ge 1)
 \end{cases}
\end{equation*}
for $\xi \in \R^d$, where $\rho$ is defined in \eqref{def: rho}.
Notice that 
 $\tilde{\psi}_n \equiv 1$ on the support  of  $\psi_n$ for each $n \geq 0$.
For a tempered distribution $u$, we define $\tilde \Delta _nu$ by $\tilde \Delta _nu= \mathcal{F}^{-1}[\tilde \psi_n \mathcal{F}u]$ for each $n\geq 0$.
Then it is straightforward to see that 
\[
\mathbb L_{1,n}'u= \sum_{\ell:\ell\not\hookrightarrow n} \Delta _n \mathbb{L}_{F,G} \tilde \Delta _\ell \Delta _\ell u \quad \text{for $u\in L^p(U')$.}
\]

We borrow the following crucial lemma by Baladi \cite{Baladibook2} (from the proof of Lemma 2.21 of \cite{Baladibook2}), which has been proven in the special case $\tilde r =r-1$ and $p=\infty$ in the survey \cite{BaladiTsujii08b} by Baladi and Tsujii (inequality (15) and a comment following inequality (19) of \cite{BaladiTsujii08b}).
Since it is a key estimate in the proof of Theorem \ref{thm: decomposition}, 
we give a full proof  in Appendix \ref{app:A}.
\begin{lem}\label{L^p estimate}
There exists a constant $C_{F,G}>0$ such that for any $p\in [1,\infty]$, $u\in L^p(\mathbb{R}^d)$, $n\geq 0$ and $\ell \not\hookrightarrow n$, the following holds:
If $\tilde r \leq r-1$, then
\begin{equation}\label{ineq: (15) of the paper}
\Vert \Delta _n \mathbb{L} _{F,G} \tilde \Delta _\ell u\Vert_{L^p} \le C _{F,G}  2^{-\tilde r \max\{n,\ell\}}\Vert u\Vert_{L^p}.
\end{equation}
If  $\tilde r \geq  1$, 
 then 
\begin{equation}\label{ineq: (15) of the paper 2}
\Vert \Delta _n \mathbb{L} _{F,G} \tilde \Delta _\ell u\Vert_{L^p} \le C _{F,G}  2^{\min \{ n,\ell \}-\tilde r \max\{n,\ell\}}\Vert u\Vert_{L^p}.
\end{equation}
\end{lem}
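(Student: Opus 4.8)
The plan is to deduce the lemma from a pointwise kernel estimate together with the Schur test, the point being that the assumption $\ell\not\hookrightarrow n$ forces a genuine frequency mismatch which makes the relevant oscillatory integrals non‑stationary. First I would localise: by \eqref{eq:re7} and \eqref{eq:re8} it suffices to bound $\Vert\Delta_n[(G\circ F_j^{-1})\cdot((\tilde\Delta_\ell u)\circ F_j^{-1})]\Vert_{L^p}$ for a single branch $F_j:=F|_{V_j}$, a $\Ci^r$ diffeomorphism onto its image with $G$ supported in $K_j\subset V_j$, and then sum the $N(F)$ contributions. Writing $\Delta_n$ as convolution with $\mathcal F^{-1}\psi_n$ and $\tilde\Delta_\ell u$ as convolution with $\mathcal F^{-1}\tilde\psi_\ell$, and changing variables $y=F_j(z)$, the branch contribution is the integral operator with kernel
\[
\mathcal K_{n,\ell}^{(j)}(x,x') = \int (\mathcal F^{-1}\psi_n)(x - F_j(z))\, G(z)\, \vert\det DF_j(z)\vert\, (\mathcal F^{-1}\tilde\psi_\ell)(z - x')\, dz .
\]

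Next I would expose the oscillation. Inserting $\mathcal F^{-1}\psi_n(w)=(2\pi)^{-d}\int\psi_n(\xi)e^{i\langle\xi,w\rangle}\,d\xi$ and the analogous formula for $\mathcal F^{-1}\tilde\psi_\ell$, the $z$-integral becomes an oscillatory integral with phase $\langle\eta,z\rangle-\langle\xi,F_j(z)\rangle$, where $\vert\xi\vert\sim 2^n$ and $\vert\eta\vert\lesssim 2^\ell$. On $\supp G$ one has $\vert DF_j(z)^{\top}\xi\vert\ge\lambda\vert\xi\vert$ (equivalently $\Vert DF_j^{-1}\Vert\le\lambda^{-1}$, which is \eqref{def: lambda}), so the phase gradient $\eta-DF_j(z)^{\top}\xi$ has modulus $\ge\lambda\vert\xi\vert-\vert\eta\vert\gtrsim\lambda 2^n$; here $\ell\not\hookrightarrow n$, i.e. $2^n>\lambda^{-1}2^{\ell+4}$, is used to absorb $\vert\eta\vert$. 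Moreover $\lambda 2^n\ge 2^{\max\{n,\ell\}}$ throughout the range $\ell\not\hookrightarrow n$ (for $n\ge\ell$ because $\lambda>1$; for $\ell>n$ because then $2^{\ell-n}<\lambda$), which is exactly what produces the exponent $\max\{n,\ell\}$ in the statement.

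Then I would integrate by parts in $z$, and this is where the dichotomy of \eqref{eq:ta} enters. If $\tilde r\le r-1$, the amplitude $G\cdot\vert\det DF_j\vert$ is genuinely $\Ci^{\tilde r}$ (since $\vert\det DF_j\vert\in\Ci^{r-1}\subset\Ci^{\tilde r}$) and the phase is $\Ci^r$ with $r\ge\tilde r+1\ge\lfloor\tilde r\rfloor+1$, so one may integrate by parts $\lfloor\tilde r\rfloor$ times and extract a further fractional gain from the Hölder modulus of the top-order term — rigorously, by a Littlewood–Paley decomposition of the amplitude, estimating each dyadic piece separately while keeping constants uniform in $\xi$ on $\{\vert\xi\vert\sim2^n\}$. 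This gains $(\lambda 2^n)^{-\tilde r}\le 2^{-\tilde r\max\{n,\ell\}}$ and yields \eqref{ineq: (15) of the paper}. If instead $\tilde r\ge1$ (and possibly $\tilde r>r-1$), the amplitude is only $\Ci^{\min\{\tilde r,r-1\}}$, so one cannot extract $\tilde r$ orders from it alone; one compensates by letting one of the derivatives fall on the band-limited factor $\tilde\Delta_\ell u$ (legitimate since $\tilde r\ge1$ gives at least one honest integration by parts, and $\tilde\Delta_\ell u$ has $L^p$-controlled derivatives of size $\lesssim 2^\ell$), paying a factor $2^{\min\{n,\ell\}}$ — the $\min$ reflecting the symmetry of the estimate under the duality $\Delta_n\mathbb{L}_{F,G}\tilde\Delta_\ell\leftrightarrow\tilde\Delta_\ell\bigl(\varphi\mapsto(\varphi\circ F)\cdot G\cdot\vert\det DF\vert\bigr)\Delta_n$ from \eqref{eq:rev7}. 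This gives \eqref{ineq: (15) of the paper 2}.

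Finally, in either case these integrations by parts, combined with the rapid decay of $\mathcal F^{-1}\psi_n$ at scale $2^{-n}$ and of $\mathcal F^{-1}\tilde\psi_\ell$ at scale $2^{-\ell}$, produce a pointwise bound of the shape $\vert\mathcal K_{n,\ell}^{(j)}(x,x')\vert\le C_{F,G}\,2^{-\tilde r\max\{n,\ell\}}$ (respectively with the extra $2^{\min\{n,\ell\}}$) times an $L^1$-normalised integrable bump in $x$ localised near $F_j(x')$ and, using the $\tilde\psi_\ell$ factor, an integrable bump in $x'$ localised near $F_j^{-1}(x)$. Hence both Schur marginals $\sup_x\int\vert\mathcal K_{n,\ell}^{(j)}\vert\,dx'$ and $\sup_{x'}\int\vert\mathcal K_{n,\ell}^{(j)}\vert\,dx$ are bounded by the same quantity uniformly in $n,\ell,j$, so the Schur test bounds the $L^p$-operator norm by their geometric mean for every $p\in[1,\infty]$, and summing over $j=1,\dots,N(F)$ produces the constant $C_{F,G}$. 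I expect the main obstacle to be making the ``integrate by parts $\tilde r$ times'' step rigorous when $\tilde r$ is non-integer and the amplitude sits at the borderline regularity $\Ci^{\min\{\tilde r,r-1\}}$: this is precisely where one must track how derivatives are distributed among $G\vert\det DF_j\vert$, the phase $F_j$, and $\tilde\Delta_\ell u$ so as never to exceed the regularity of any of them, and where uniformity of all constants over $\{\vert\xi\vert\sim2^n\}$ (hence in $n$) must be maintained.
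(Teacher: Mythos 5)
Your overall architecture coincides with the paper's: reduce to single branches, write $\Delta_n\mathbb{L}_{F,G}\tilde\Delta_\ell$ as an integral operator whose kernel is an oscillatory integral in the interior variable, exploit that $\ell\not\hookrightarrow n$ makes the phase gradient of size $\gtrsim 2^{\max\{n,\ell\}}$, integrate by parts, and conclude from a pointwise kernel bound (the paper bounds the kernel by a convolution bump $b_{\min\{n,\ell\}}(x-y)$ and uses Young's inequality after factoring out the Jacobian through the $L^p$-bounded operator $\mathbb M_1:\Phi\mapsto\Phi\circ T\cdot\vert\det DT\vert$; your Schur-test variant is an acceptable substitute). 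For \eqref{ineq: (15) of the paper} ($\tilde r\le r-1$) your count is right: the amplitude $G\vert\det DF_j\vert$ is $\Ci^{\tilde r}$ and the phase is $\Ci^{\tilde r+1}$, so $\tilde r$ integrations by parts are available; your treatment of non-integer $\tilde r$ (``fractional gain via a Littlewood--Paley decomposition of the amplitude'') is only sketched, whereas the paper carries this out by a regularised integration by parts with mollification at scale $2^{-\max\{n,\ell\}}$, but this is a matter of detail rather than of substance.

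The genuine gap is in \eqref{ineq: (15) of the paper 2} in the only regime where it is actually needed, namely $1\le\tilde r$ with $r-1<\tilde r\le r$. Your mechanism---``let one of the $\tilde r$ derivatives fall on the band-limited factor $\tilde\Delta_\ell u$, paying $2^{\min\{n,\ell\}}$''---is not something you can enforce: Leibniz distributes the derivatives in all possible ways, and the terms in which \emph{no} derivative falls on the band-limited factor require either $\tilde r$ derivatives of the amplitude $G\vert\det DF_j\vert$ (which is only $\Ci^{r-1}$, and $r-1<\tilde r$) or derivatives of the phase of order $\tilde r+1$ (i.e.\ $\partial^{\tilde r+1}F_j$, which does not exist since $F_j$ is only $\Ci^r$ and $\tilde r+1>r$). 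These uncontrollable terms are exactly the obstruction, and the duality remark does not remove it. The paper's solution is structurally different: (i) the Jacobian is never differentiated, because it is shunted into the composition with $\mathbb M_1$, so the oscillatory amplitude is the genuinely $\Ci^{\tilde r}$ function $\tilde G$; and (ii) the \emph{first} integration by parts is performed against the linearised phase $\tilde\phi_{(\xi,\eta,x,y)}(w)=(x-w)\xi+DT(y)(w-y)\eta$, which is affine in $w$ and hence needs no extra smoothness of $T$; the price is the factor $e^{i(\phi-\tilde\phi)}$, whose differentiation produces one factor of $\eta$, of size $2^\ell\lesssim 2^{\min\{n,\ell\}}$, after which the remaining $\tilde r-1$ integrations by parts fit within $\tilde G\in\Ci^{\tilde r}$ and $T\in\Ci^{r}$. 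Note also that this fix depends on working with the inverse branches: in the paper's kernel \eqref{eq:revdef4} the nonlinear phase term $(T(w)-T(y))\eta$ carries the frequency $\eta$, which under $\ell\not\hookrightarrow n$ is always $\lesssim 2^{\min\{n,\ell\}}$, whereas in your forward-map kernel the nonlinear term $\langle\xi,F_j(z)\rangle$ carries $\xi\sim 2^n$, which can equal $2^{\max\{n,\ell\}}$; so even the linearisation trick would not transfer to your formulation without first switching to the inverse-branch/composition set-up.
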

%
Fix $\sigma  \in (0,s)$ satisfying that
\begin{equation}\label{eq:conditionsigma}
\sigma >s - \tilde r +1 + \delta  \quad  \text{when $\tilde r>1$}
\end{equation} 
with some $\delta \in (0, \tilde r-1)$.
Let $\gamma _{n, \ell}=1 $ when $\tilde r\leq 1$ and $\gamma _{n,\ell } =2^{\min \{ n,\ell \}}$ when $\tilde r >1$. Then, 
applying Lemma \ref{L^p estimate} together with \eqref{eq:above3}, 
we can find a nonnegative $\ell ^q$ sequence $\{c_\ell \}_{\ell \geq 0}$ satisfying $\Vert \{c_\ell \}_{\ell \geq 0}\Vert _{\ell ^q}\leq 1$ such that 
\begin{align*}
 \Vert \mathbb L_{1,n} 'u  \Vert _{L^p}   &\leq C_{F,G} \sum_{\ell : \ell \not\hookrightarrow n} \gamma _{n, \ell} 2^{-\tilde r \max\{ n,\ell \}} \Vert \Delta _\ell u\Vert_{L^p} \\
  & \leq C_{F,G} \sum_{\ell : \ell \not\hookrightarrow n} \gamma _{n, \ell} 2^{-\sigma \ell -\tilde r\max\{ n,\ell \}} c_\ell \Vert u\Vert_{B^\sigma_{pq}}.
\end{align*}

Therefore, in the case $q=\infty$, 
\begin{equation}
\sup _{n\geq 0} 2^{sn} \Vert \mathbb L_{1,n} 'u  \Vert _{L^p} 
 \leq
C_{F,G} \Vert u\Vert _{B^\sigma_{pq}} \sup _{n\geq 0}\sum_{\ell : \ell \not\hookrightarrow n}  \gamma _{n,\ell }2^{sn-\sigma \ell -\tilde r \max\{ n,\ell \}} .
\end{equation}
%
%
%
Since $-\tilde r\max \{ n,\ell \} \leq -\tilde rn$, and we assumed $\tilde r\geq s$ and $\sigma >0$,
\[
\sum_{\ell : \ell \not\hookrightarrow n} 2^{sn-\sigma\ell-\tilde r\max\{n,\ell\}} 
\le 
(1-2^{-\sigma} ) ^{-1} <\infty.
\]
Furthermore,  
\eqref{eq:conditionsigma} 
 implies that when $\tilde r>1$, we have 
\begin{equation}\label{eq:0430b}
\min \{ n,\ell \} + sn-\sigma \ell -\tilde r \max\{ n,\ell \} 
<
\begin{cases}
-(\tilde r -s)(n -\ell ) -\delta \ell  \quad &(\ell < n)\\
-(s+1)(\ell -n ) -\delta \ell \quad &( \ell \geq n)
\end{cases},
\end{equation}
which is bounded by $-\delta \ell$, and thus
\[
\sum_{\ell : \ell \not\hookrightarrow n}  2^{\min \{ n,\ell \} + sn-\sigma \ell -\tilde r \max\{ n,\ell \}} 
<(1-2^{-\delta } ) ^{-1} <\infty.
\]
Hence, we obtain \eqref{eq:0602b} with $\bar C_{F,G} =C_{F,G} (1-2^{-\sigma })^{-1}$.


When $q<\infty$, we can similarly have
\begin{equation*}
\Big( \sum _{n\geq 0} 2^{sqn} \Vert \mathbb L_{1,n} 'u  \Vert _{L^p} ^q \Big) ^{1/q} \le C_{F,G}\Vert u\Vert_{B^{\sigma}_{pq}}\Big( \sum_{n\ge 0}  \Big(  \sum_{\ell : \ell \not\hookrightarrow n} \gamma _{n,\ell }2^{sn-\sigma\ell-\tilde r\max\{n,\ell\}} c_\ell\Big)^q\Big)^{1/q}
\end{equation*}
for some $\{c_\ell\} _{\ell \geq 0}\in \ell^q$ satisfying $\Vert \{c_\ell\} _{\ell \geq 0} \Vert _{\ell ^q}\leq 1$.
Let $\tau = \tilde r -s$. 
Then, $\tau>0$ by assumption of Theorem \ref{thm: main}, 
and
$sn
 -\tilde r\max\{n,\ell\} \leq sn-\tilde rn \leq  -\tau (n -\ell ) $.  Moreover, it follows from \eqref{eq:0430b} that $\min \{ n, \ell \} + sn-\sigma \ell -\tilde r\max\{n,\ell\} \leq  -\tau (n -\ell )$ when $\tilde r>1$. Hence, this leads to
\begin{equation*}
 \sum_{\ell: \ell \not\hookrightarrow n} \gamma _{n,\ell } 2^{sn-\sigma\ell- \tilde r\max\{n,\ell\}} c_\ell
  \le   \sum_{\ell\ge 0} 1_{\{0 \not\hookrightarrow \cdot\}}(n-\ell) 2^{-\tau (n -\ell )} c_\ell.
\end{equation*}
Therefore, using an argument similar to the one following \eqref{eq: geological sequence} together with \eqref{eq:0602c}, we have
\begin{align}\label{eq:e33}
\Big( \sum _{n\geq 0} 2^{sqn} \Vert \mathbb L_{1,n} 'u  \Vert _{L^p} ^q \Big) ^{1/q} &\le C_{F,G}\Vert u\Vert_{B^{\sigma}_{pq}} \left( \sum _{\ell \in \Z : 0  \not\hookrightarrow \ell} 2^{-\tau \ell} \right)\\
&\leq \frac{ C_{F,G}2^{-\tau (3-\log _2\lambda )}}{1-2^{-\tau}}\Vert u\Vert_{B^{\sigma}_{pq}} .\notag
\end{align}
This completes the proof of \eqref{eq:0602b}.
\end{proof}

\begin{remark}\label{rmk:3}
We cannot show \eqref{eq:0602b} for $s=\tilde r$ and $q<\infty$ in the same manner as the case $q=\infty$. 
Indeed, in a manner similar to obtaining 
\eqref{eq:e33}, we have $\sum_{\ell : \ell \not\hookrightarrow n} 2^{\tilde rn-\sigma\ell-\tilde r\max\{n,\ell\}} \geq \sum_{\ell : \ell \not\hookrightarrow n, \ell \leq n} 2^{-\sigma\ell } \geq 1 $ for any sufficiently large $n\geq 0$. 
It follows that
$\Big\{\sum_{\ell : \ell \not\hookrightarrow n} 2^{\tilde rn-\sigma\ell-\tilde r\max\{n,\ell\}}\Big\} _{n\geq 0}$  is 
 not in $\ell^q$  ($q<\infty$).
\end{remark}

\subsection{Completion of the proof}
Now we can complete the proof of Theorem \ref{thm: main} by reducing the transfer operator $\mathcal L_{f^n,g^{(n)}} = \mathcal L^n_{f,g}$ to a family of transfer operators on the local charts  and applying Theorem \ref{thm: decomposition} for each $n\geq 1$.
\begin{proof}[Proof of Theorem \ref{thm: main}]
For the time being, we shall fix $n\geq 1$ and suppress it from the notation. 
Let $\iota$ be an  isometric embedding from $\mathscr{B}_{pq}^s(M)$ to $\bigoplus _{i=1}^I B_{pq}^s(U_i)$
(equipped with the norm $\Vert (u_i)_{i=1}^I\Vert _{\bigoplus _{i=1}^I B_{pq}^s(U_i)} =\sum _{i=1}^I\Vert u_i\Vert _{B_{pq}^s}$) 
\color{black}
given by
\[
\iota (u) =\left( \phi _i\cdot u\circ \kappa _i^{-1}\right)_{i=1}^I, \quad u\in \mathscr{B}_{pq}^s(M).
\]

For $1\leq i,j\leq I$, we let $U_{ij}\equiv U_{n,ij}=\kappa _j(V_j\cap (f^n)^{-1}(V_i))$  and let $F_{ij}\equiv F_{n,ij}$ be a $\Ci ^r$ mapping   
on $\R ^d$
such that
\[
F_{ij}(x) =\kappa _i \circ  f^n \circ \kappa _j^{-1}(x) \quad \text{for   $x\in U_{ij}$}.
\]
Furthermore, let 
$G_{ij}\equiv G_{n,ij}$ be 
 a compactly supported $\Ci^{\tilde r}$ function on $\R^d$ such that
\[
G_{ij} (x) =\phi _i\circ F_{ij}(x) \cdot \phi _j(x) \cdot g^{(n)}\circ \kappa _j^{-1}(x)  \quad \text{for   $x\in \R ^d$}.
\]
By the existence of Markov partitions for expanding maps \cite{FU2010}, one can find finitely many compact sets $\{ K_\ell \} _{\ell =1}^{N(f^n)}$ and their neighbourhoods $\{ V_\ell \} _{\ell =1}^{N(f^n)}$ satisfying \eqref{eq:re7} and \eqref{eq:re8}.
Finally we define $\mathbf L\equiv \mathbf L_n:\bigoplus _{i=1}^I B_{pq}^s(U_i) \to  \bigoplus _{i=1}^I B_{pq}^s(U_i)$ by
\begin{equation}\label{eq:charttransfer2}
\mathbf L\left( (u_i) _{i=1}^I\right) = \left( \sum _{j=1}^I \mathbb L_{ij}\color{black}u_j \right) _{i=1}^I \quad \text{for $(u_i) _{i=1}^I\in  \bigoplus _{i=1}^I B_{pq}^s(U_i)$},
\end{equation}
 where $ \mathbb L_{ij}\equiv \mathbb L_{n,ij}:B_{pq}^s(U_j) \to B_{pq}^s(U_i)$ for $1\leq i,j\leq I$
 is given  by
\begin{equation}\label{eq:charttransfer}
 \mathbb L_{ij} u(x) = 
 \sum _{F_{ij}(y)=x} G_{ij}(y) \cdot u(y)   \quad \text{for   $x\in \R ^d$}.
\end{equation}
(More precisely, we define $\mathbf L$ as a bounded operator on $\bigoplus _{i=1}^I \Ci ^s(U_i)$ by \eqref{eq:charttransfer2} and \eqref{eq:charttransfer}, and it can be extended to  a bounded operator on $\bigoplus _{i=1}^I B_{pq}^s(U_i)$ by virtue of Theorem \ref{thm: decomposition}.)
Then it is straightforward to see that
the following commutative diagram holds:
\begin{equation}\label{eq:f2}
\begin{array}{ccc}
\mathscr{B}_{pq}^s(M) & \stackrel{\mathcal L_{f,g}^n}{\to} & \mathscr{B}^s _{pq}(M) \\
\downarrow\text{\scriptsize{$\iota$}} & \circlearrowleft & \downarrow\text{\scriptsize{$\iota$}} \\
\bigoplus _{i=1}^I B^s_{pq}(U_i) & \stackrel{\mathbf L }{\to} & \bigoplus _{i=1}^I B^s_{pq}(U_i).
\end{array}
\end{equation}
In particular, $\Vert \mathcal L_{f,g}^n u\Vert _{\mathscr B^s_{pq}}=\Vert \mathbf L\circ \iota (u)\Vert _{\bigoplus _{i=1}^I B^s_{pq}(U_i)}$ for each $u\in \mathscr B^s_{pq}(M)$.

It follows from Theorem \ref{thm: decomposition} that one can find a constant $\hat C _n>0$ and $\sigma  \in (0,s)$ such that for each $1\leq i,j\leq I$ satisfying  $U_{ij}\neq \emptyset$
and
 each $u\in B^s_{pq}(U_j)$,  we get the 
 bound of $\Vert \mathbb L_{ij} u \Vert _{B^s_{pq}} $  by
\begin{multline}\label{eq:e1}
 \gamma _s\sup _{x\in F_{ij}(U_{ij})} \left\vert \sum _{F_{ij}(y)=x}  \frac{1}{\vert \det DF_{ij} (y)\vert } \right\vert  ^{1/p'}
 \left\Vert G_{ij}  \det DF_{ij}\right\Vert _{L^\infty}  \\
\times \left(\min_{x\in U_{ij}}\min _{\stackrel{v\in \R ^d}{\vert v\vert =1}}  \vert DF_{ij}(x)v\vert  \right)^{-s} \Vert u \Vert _{B^s_{pq}} + \hat C_n \Vert u \Vert _{B^\sigma _{pq}},
\end{multline}
where the second factor of the first term is replaced with $1$ when $p=1$.
On the other hand, it is straightforward  to see that there is a constant $C_2>0$ independently of $n,i,j$ such that the first term of \eqref{eq:e1} is bounded by
\begin{equation}\label{eq:f1}
C_2\gamma _s\Vert \mathcal L_{f,\vert \det Df\vert ^{-1}} ^n1_{M}\Vert _{L^\infty} ^{1/p'} R_n(g) \left( \min_{x\in M}\min _{\stackrel{v\in T_xM}{\vert v\vert =1}}  \vert Df^n(x)v\vert  \right)^{-s} \Vert u \Vert _{B^s_{pq}} .
\end{equation}
Moreover, it follows from  the estimate (4.5) in \cite{BKS96} that $\Vert \mathcal L_{f,\vert \det Df\vert ^{-1}} ^n1_{M}\Vert _{L^\infty}$ is bounded by a constant $C_3>1$ which is independent of $n$.

Therefore, by \eqref{eq:f2}, we obtain the bound of $\Vert \mathcal  L_{f, g} ^n u \Vert _{\mathscr B^s_{pq}} $  by 
\[
IC_2C_3^{1/p'}\gamma _s
\left( \min_{x\in M}\min _{\stackrel{v\in T_xM}{\vert v\vert =1}}  \vert Df^n(x)v\vert  \right)^{-s}
R_n(g)\Vert u \Vert _{\mathscr B^s_{pq}} +I \hat C _n\Vert u \Vert _{\mathscr B^\sigma_{pq}}
\]
for any $n\geq 1$.
Since the embedding $\mathscr B^s _{pq}(M)\hookrightarrow \mathscr B^\sigma_{pq}(M)$ is compact (see \cite[Corollary 2.96]{BCD}), we can apply  Hennion's theorem \cite{Hennion1993} (see also \cite[Appendix A]{Baladibook2}): we finally have 
\[
r_{\mathrm{ess}} (\mathcal L _{f,g}\vert _{\mathscr B^s_{pq} (M)}) \leq \liminf _{n\to \infty} \left( IC_2C_3^{1/p'}\gamma _s
\left( \min_{x\in M}\min _{\stackrel{v\in T_xM}{\vert v\vert =1}}  \vert Df^n(x)v\vert  \right)^{-s}
R_n(g) \right) ^{1/n}.
\]
This completes the proof of Theorem \ref{thm: main}.
\end{proof}

\appendix
\section{Proof of Lemma \ref{L^p estimate}}\label{app:A}
We follow \cites{BaladiTsujii08b,Baladibook2}. 
Recall  
$\{ V_j\} _{j=1}^{N(F)}$ in 
 \eqref{eq:re8}. 
 Let $\{ \theta _j \}_{j=1}^{N(F)}$ be a partition of unity subordinate to  $\{ V_j \} _{j=1} ^{N(F)}$.
 Given $1\leq j\leq N(F)$, let $I_j$ be the set of integers $i$ in $\{1,\dots,N(F)\}$ such that $F(V_i) \cap V_j \neq \emptyset$. 
 For $1\leq j\leq N(F)$, let $T_j :\R ^d \to \R^d$ be a $\Ci ^r$ mapping such that
\[
T_j  (x)= (F \vert _{V_j} )^{-1}(x) \quad  \text{for  $x \in F(V_j)$},
\]
that $T_j(x) \not \in V_j $ if $x\not \in F(V_j)$,
and that $\vert T_j(x) - T_j(y)\vert >c_j \vert x-y \vert $ for any $x,y \in \R ^d$ with some constant $c_j >0$. 
 Then we have
 \begin{equation}\label{eq:0603}
 \mathbb L_{F,G} u = \sum _{j=1}^{N(F)} \sum _{i\in I_j}    \theta _j  \cdot ( \theta _i \cdot G \cdot u) \circ T_i .
 \end{equation}
Indeed, we  have
\[
 \sum _{j=1}^{N(F)} \sum _{i\in I_j}  \theta _j  (x)   ( \theta _i \cdot G \cdot u) \circ T_i (x) 
 =  \sum _{j: x\in V_j} \theta _j  (x) \sum _{i\in I_j}     ( \theta _i \cdot G \cdot u) \circ T_i (x)  .
\]
If $x \not \in F(V_i)$, then $T_i(x)\not \in V_i$ and it yields $( \theta _i \cdot G \cdot u) \circ T_i (x) =0$, thus
\[
 \sum _{j: x\in V_j} \theta _j  (x) \sum _{i\in I_j}     ( \theta _i \cdot G \cdot u) \circ T_i (x) 
 =  \sum _{j: x\in V_j} \theta _j  (x) \sum _{i\in I_j: x\in F(V_i)}     ( \theta _i \cdot G \cdot u) \circ (F \vert _{V_i} )^{-1} (x)  .
\]
On the other hand, for $j$ such that $x\in V_j$, $x\in F(V_i) $ implies $i \in I_j$, so
\[
  \sum _{i\in I_j: x\in F(V_i)}     ( \theta _i \cdot G \cdot u) \circ (F \vert _{V_i} )^{-1} (x) 
  =  \sum _{i: x\in F(V_i)}     ( \theta _i \cdot G \cdot u) \circ (F \vert _{V_i} )^{-1} (x) .
\]
Furthermore, for $i$ such that $ x\in F(V_i)$, setting $y= (F \vert _{V_i} )^{-1} (x) $ gives $F(y) = x$ and $y\in 
 V_i$, therefore
 \[
\sum _{i: x\in F(V_i)}    ( \theta _i \cdot G \cdot u) \circ (F \vert _{V_i} )^{-1} (x) 
  = \sum _{F(y) =x} \sum _{i : y\in V_i}( \theta _i \cdot G \cdot u) (y) = \sum _{F(y) =x} ( G \cdot u) (y) .
\]
 Hence, we get \eqref{eq:0603}, and 
 it follows from Minkowski's inequality that
 \[
 \Vert \Delta _n \mathbb L_{F,G} \Delta _\ell  u \Vert _{L^p}  \leq 
 \sum _{j=1} ^{N(F) } \sum _{i\in I_j} \Vert  \Delta _n \mathbb L_{ij}  \Delta _\ell u \Vert _{L^p},
 \]
 where $\mathbb L_{ij}u =  [\theta _j  \cdot ( \theta _i \cdot G ) \circ T_i ] \cdot u\circ T_i$ with a $\Ci ^{\tilde r}$ function $\theta _j  \cdot ( \theta _i \cdot G ) \circ T_i$ and a $\Ci ^r$ mapping $T_i$.
 
 Therefore,  it suffices to show the following local version of Lemma \ref{L^p estimate}: 
Let $T:\R ^d \to \R ^d$ be a $\Ci ^r$ mapping such that $\vert T(x) - T(y)\vert >\tilde c \vert x-y \vert $ for any $x,y \in \R ^d$ with some constant $\tilde c >0$, and that $T: V\to T(V)$ is a $\Ci ^r$ diffeomorphism on a bounded open set 
 $V$.
Let $\tilde G $ be a $\Ci ^{\tilde r}$ function whose support is included in $V$. 
Define  (with slight abuse of notation) $\mathbb L _{T, \tilde G}:L^p (\R ^d ) \to L^p (\R ^d)$ by
\[
\mathbb L _{T, \tilde G}u = \tilde G \cdot u \circ T,\quad   u\in L^p (\R ^d ).
 \]
 Also, we use the notation $\ell \not\hookrightarrow n$ when it holds $2^n > \Lambda 2^{\ell+4}$, with
 \begin{align*}
 \Lambda=\max_{w\in\supp (\tilde G) }\max_{\stackrel{v\in \R^d}{\vert v\vert =1}} \vert DT^{tr}(w)v\vert ,
 \end{align*}
  where   $DT^{tr}(w)$ is the transpose matrix of $DT(w)$.
\begin{lem}\label{L^p estimate2}
There exists a constant $C_{T,\tilde G} >0$ such that for any $p\in [1,\infty]$, $u\in L^p(\R ^d )$, $n\geq 0$ and $\ell \not \hookrightarrow n$, 
it holds
\begin{align}\label{ineq: Applem1}
\Vert \Delta _n \mathbb{L} _{T,\tilde G} \tilde \Delta _\ell u\Vert_{L^p} \le C _{T,\tilde G} 2^{-\tilde r \max\{n,\ell\}}\Vert u\Vert_{L^p}
\end{align}
when $\tilde{r}\leq r- 1$ and
\begin{align}\label{ineq: Applem2}
\Vert \Delta _n \mathbb{L} _{T,\tilde G} \tilde \Delta _\ell u\Vert_{L^p} \le C _{T,\tilde G} 2^{\min\{n,\ell\}-\tilde r \max\{n,\ell\}}\Vert u\Vert_{L^p}
\end{align}
when $\tilde{r}\geq 1$.
\end{lem}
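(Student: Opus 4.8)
The plan is to realise $\Delta _n\mathbb{L}_{T,\tilde G}\tilde\Delta _\ell$ as an integral operator and bound its Schwartz kernel, so that Schur's test yields the $L^p\to L^p$ estimate for \emph{all} $p\in[1,\infty]$ simultaneously. With $P_n=\mathcal{F}^{-1}\psi _n$ and $\tilde P_\ell=\mathcal{F}^{-1}\tilde\psi _\ell$ — dyadic rescalings of fixed Schwartz profiles, so $\Vert P_n\Vert _{L^1}$, $\Vert\tilde P_\ell\Vert _{L^1}$ are bounded while $\Vert P_n\Vert _{L^\infty}\sim 2^{dn}$, $\Vert\tilde P_\ell\Vert _{L^\infty}\sim 2^{d\ell}$ — one has $\Delta _n\mathbb{L}_{T,\tilde G}\tilde\Delta _\ell u(x)=\int K_{n,\ell}(x,y)\,u(y)\,dy$ with
\[
K_{n,\ell}(x,y)=\int _{\R ^d}P_n(x-z)\,\tilde G(z)\,\tilde P_\ell(T(z)-y)\,dz .
\]
Inserting the Fourier representations of $P_n$ and $\tilde P_\ell$, the $z$-integral becomes the oscillatory integral $I(\xi ,\eta)=\int _{\supp\tilde G}e^{i(T(z)\cdot\eta -z\cdot\xi)}\,\tilde G(z)\,dz$, with phase $\Phi _{\xi ,\eta}(z)=T(z)\cdot\eta -z\cdot\xi$ and $\nabla _z\Phi =DT^{tr}(z)\eta -\xi$, and
\[
K_{n,\ell}(x,y)=(2\pi)^{-2d}\iint e^{i(x\cdot\xi -y\cdot\eta)}\,\psi _n(\xi)\,\tilde\psi _\ell(\eta)\,I(\xi ,\eta)\,d\xi \,d\eta .
\]

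The key point is that the non-resonance hypothesis $\ell\not\hookrightarrow n$ (i.e.\ $2^n>\Lambda 2^{\ell +4}$) forces $|DT^{tr}(z)\eta|\le\Lambda|\eta|\le\Lambda 2^{\ell +2}<2^{n-2}\le\tfrac12|\xi|$ for $z\in\supp\tilde G$, $\xi\in\supp\psi _n$, $\eta\in\supp\tilde\psi _\ell$, whence $|\nabla _z\Phi|\sim 2^{\max\{n,\ell\}}$ there (using also $\ell\le n+C_\Lambda$): the phase is non-stationary. I would then integrate by parts in $z$ with $L=i^{-1}|\nabla\Phi|^{-2}\nabla\Phi\cdot\nabla _z$ (so $Le^{i\Phi}=e^{i\Phi}$): each step gains a factor $|\nabla\Phi|^{-1}\sim 2^{-\max\{n,\ell\}}$ and either differentiates $\tilde G$ (consuming one unit of its $\Ci ^{\tilde r}$-regularity) or differentiates the amplitude $\nabla\Phi/|\nabla\Phi|^2$, which brings the harmless factor $|\partial ^2\Phi|/|\nabla\Phi|\lesssim 2^{\min\{n,\ell\}}/2^{\max\{n,\ell\}}\le 1$ at the cost of one further derivative of $T$. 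Carrying this out $\lfloor\tilde r\rfloor$ times and absorbing the fractional part $\tilde r-\lfloor\tilde r\rfloor$ by a Littlewood--Paley splitting $\tilde G=\Delta _{<n}\tilde G+\Delta _{\ge n}\tilde G$ (the high piece being directly $O(2^{-\tilde r\max\{n,\ell\}}\Vert\tilde G\Vert _{\Ci ^{\tilde r}})$, the low piece allowing one more integration by parts) gives $|I(\xi ,\eta)|\lesssim 2^{-\tilde r\max\{n,\ell\}}\Vert\tilde G\Vert _{\Ci ^{\tilde r}}$ uniformly in $\xi ,\eta$ — provided the $T$-derivatives demanded above exist, i.e.\ $T\in\Ci ^{\lfloor\tilde r\rfloor +1}$, which is exactly $\tilde r\le r-1$. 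Integrating by parts in $\xi$ and $\eta$ as well (the $L$-amplitude being smooth on the scales $2^n$, $2^\ell$, while $P_n$, $\tilde P_\ell$ supply rapidly decaying localisation at scales $2^{-n}$, $2^{-\ell}$) then upgrades this to
\[
|K_{n,\ell}(x,y)|\lesssim 2^{-\tilde r\max\{n,\ell\}}\Vert\tilde G\Vert _{\Ci ^{\tilde r}}\!\int _{\supp\tilde G}\! 2^{dn}(1+2^n|x-z|)^{-M}\,2^{d\ell}(1+2^\ell|y-T(z)|)^{-M}\,dz
\]
for every $M$. As $T$ is a diffeomorphism near $\supp\tilde G$, the change of variables $w=T(z)$ is licit on the support of the integrand, so $\sup _x\int|K_{n,\ell}(x,y)|\,dy$ and $\sup _y\int|K_{n,\ell}(x,y)|\,dx$ are both $\lesssim 2^{-\tilde r\max\{n,\ell\}}\Vert\tilde G\Vert _{\Ci ^{\tilde r}}$, and Schur's test gives \eqref{ineq: Applem1}. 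For \eqref{ineq: Applem2} one runs the same scheme, but now $T$ has a deficient regularity budget ($r$ possibly $<\tilde r+1$); performing only as many integrations by parts as $\Ci ^r$ permits (and using, for the non-smooth step, a mollification of $\nabla\Phi/|\nabla\Phi|^2$) leaves one uncompensated factor $|\partial ^2\Phi|\lesssim|\eta|\lesssim 2^{\min\{n,\ell\}}$, producing $|I(\xi ,\eta)|\lesssim 2^{\min\{n,\ell\}-\tilde r\max\{n,\ell\}}\Vert\tilde G\Vert _{\Ci ^{\tilde r}}$ and, via the same Schur argument, \eqref{ineq: Applem2}; the hypothesis $\tilde r\ge 1$ (so $\tilde G$ is at least Lipschitz) is what makes this reduced scheme consistent.

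The main obstacle is the bookkeeping inside the iterated integrations by parts: one must track, among the combinatorially many terms of $(L^{*})^{k}\tilde G$, how many derivatives of $\tilde G$ and of $T$ each consumes, so as to stay within the budgets $\Ci ^{\tilde r}$ and $\Ci ^r$, to verify that derivatives falling on the phase cost only factors $\lesssim 1$ in the first regime and at most $2^{\min\{n,\ell\}}$ in the second, and — just as importantly — to keep enough decay in $x$ and $y$ for the two Schur integrals to converge (with $T$- and $\tilde G$-dependent constants). The genuinely delicate point is the fractional case, $\tilde r\notin\N$ (and, in the weak regime, $r\notin\N$, so that $\partial ^2\Phi$ need not exist classically): there the classical integrations by parts must be interleaved with Littlewood--Paley truncations of $\tilde G$, and in the weak regime the last would-be integration by parts replaced by a direct non-smooth oscillatory-integral estimate; making these pieces fit together is the step I expect to require the most care.
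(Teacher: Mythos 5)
Your strategy coincides with the paper's: realise $\Delta _n \mathbb{L}_{T,\tilde G}\tilde \Delta _\ell$ as an integral operator, bound its kernel by a non-stationary-phase argument in the integration variable over $\supp \tilde G$ --- the condition $\ell \not\hookrightarrow n$ forcing $\vert \xi -DT^{tr}(z)\eta \vert \gtrsim 2^{\max\{n,\ell\}}$ on the relevant supports --- then integrate by parts in $(\xi ,\eta )$ to localise in space and conclude by a Schur-type test uniformly in $p$. The paper does exactly this, with two cosmetic differences: it first factors out the change of variables $\Phi \mapsto \Phi \circ T\cdot \vert \det DT\vert$ so that the kernel $V_n^\ell(x,y)$ depends on $T(w)-T(y)$ and can be dominated by a convolution kernel $b_{\min\{n,\ell\}}(x-y)$ (Young's inequality in place of Schur's test), and it treats fractional $\tilde r$ by mollifying the amplitude (``regularised integration by parts'') rather than by a Littlewood--Paley splitting of $\tilde G$; these are interchangeable. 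Your outline of \eqref{ineq: Applem1} is sound.

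The gap is in \eqref{ineq: Applem2}, whose only content beyond \eqref{ineq: Applem1} is the regime $\tilde r >r-1$. There the $k$-th integration by parts against the true phase inserts $D^{k+1}T$ into the amplitude, so ``performing only as many integrations by parts as $\Ci ^r$ permits'' gains only $2^{-(r-1)\max\{n,\ell\}}$, which falls short of the required $2^{-\tilde r\max\{n,\ell\}}$; mollifying $\nabla \Phi /\vert \nabla \Phi \vert ^2$ does not by itself recover this deficit, nor does it produce the loss $2^{\min\{n,\ell\}}$ in the clean form you assert (take $r=\tilde r$ slightly above $1$: not even one classical integration by parts is available). The missing device (Step 2 of the paper's appendix) is to perform \emph{one} of the $\tilde r$ integrations by parts against the phase linearised at the target point, $\tilde \phi (w)=(x-w)\xi +DT(y)(w-y)\eta$: its $w$-gradient is still comparable to $2^{\max\{n,\ell\}}$, so the full gain is kept, and being linear in $w$ it consumes no further derivative of $T$; the price is the conjugating factor $e^{i(\phi -\tilde \phi )}$ in the amplitude, whose $w$-derivative $(DT^{tr}(w)-DT^{tr}(y))\eta =O(2^{\ell})=O(2^{\min\{n,\ell\}})$ is precisely the loss appearing in \eqref{ineq: Applem2}. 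Without some such substitute for one integration by parts, your derivative budget for $T$ does not close, and this --- rather than the fractional-order bookkeeping you flag --- is the step that actually needs a new idea.
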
 
\begin{proof} 
We start the proof by noticing that $\Delta _n \mathbb L_{T,\tilde G} \tilde\Delta _\ell :L^p(\R ^d) \to L^p(\R ^d)$ is the composition $H_n^\ell \circ \mathbb M_1$
of a bounded operator $\mathbb M_1 : \Phi\mapsto \Phi\circ T \cdot \vert \det DT\vert $ on $L^p(\R ^d)$, whose operator norm is bounded by a constant $C_T >0$ (see \eqref{eq:rev7}), and 
an integral operator $H_{n}^\ell :\Phi\mapsto \int _{\R ^d}V_n^\ell (\cdot ,y) \Phi(y) dy$ on $L^p(\R ^d)$ with kernel 
\begin{equation}\label{eq:revdef4}
V _n^\ell (x,y ) =\int _{\R ^{3d}} e^{i(x-w)\xi +i (T(w) -T(y))\eta } \tilde G(w) \psi _n (\xi ) \tilde \psi _\ell (\eta )dwd\xi d\eta .
\end{equation}
We will give a bound of $V_n^\ell$ by a convolution kernel together with the factor $2^{-\tilde r\max \{n,\ell \}}$: 
Let    $b: \R ^d \to (0,1]$   be the  integrable function given by
\begin{equation}\label{eq:b}
b(x)= 
\begin{cases}
1 \quad &(\vert x\vert \leq 1),\\
\vert x\vert ^{-d-1} \quad  & (\vert x\vert >1),
\end{cases} 
\end{equation}
and we set
\begin{equation}\label{eq:revdef5}
b_m: \R^d \to \R ,\quad b_m(x) = 2^{dm} \cdot b(2^{m} x) \quad \text{for $m>0$}.
\end{equation}
Then, we will show that there is a constant $\tilde C_{T,\tilde G}>0$ such that for all $\ell \not\hookrightarrow  n$,
\begin{equation}\label{eq:revkey}
\vert V _n^\ell (x,y )\vert \leq \tilde C_{T, \tilde  G} 2^{-\tilde r\max \{ n,\ell \}}  b_{\min \{ n,\ell \}} (x-y)\quad \mathrm{if}\ \tilde{r}\le r-1,
\end{equation} 
and
\begin{equation}\label{eq:revkey2}
\vert V _n^\ell (x,y )\vert \leq \tilde C_{T, \tilde  G} 2^{\min \{ n,\ell \} -\tilde r\max \{ n,\ell \}}  b_{\min \{ n,\ell \}} (x-y)\quad \mathrm{if}\ \tilde{r}>1.
\end{equation} 
It follows from \eqref{eq:revkey}, by  Young's inequality, that 
\begin{align*}
\Vert H_n^\ell  \Phi  \Vert _{L^p} &\leq  \tilde C _{T,\tilde G} 2^{-\tilde r\max \{ n,\ell \}} \Vert b_{\min \{ n,\ell \}}* \Phi \Vert _{L^p} \\
&\leq  \tilde C _{T,\tilde G} 2^{-\tilde r\max \{ n,\ell \}}  \Vert b_{\min \{ n,\ell \}} \Vert _{L^1} \Vert  \Phi \Vert _{L^p},
\end{align*}
which is bounded by $ \tilde C _{T,\tilde G} \Vert b\Vert _{L^1}2^{-\tilde r\max \{ n,\ell \}}  \Vert \Phi \Vert _{L^p}$ since $\Vert b_m\Vert _{L^1} =\Vert b\Vert _{L^1}$ for any $m\geq 1$, and we get \eqref{ineq: Applem1} with $C_{T,\tilde G} =C_T \tilde C_{T,\tilde G}\Vert b\Vert _{L^1}$.
Similarly, \eqref{ineq: Applem2} follows from  \eqref{eq:revkey2}.
In the rest of the proof, we are dedicated to showing \eqref{eq:revkey} and \eqref{eq:revkey2}.


\bf Step 1: \rm We first prove  \eqref{eq:revkey}. \rm 
We 
further 
divide the proof of  \eqref{eq:revkey} according to whether  $\tilde r$ is an integer.

\bf Case (1-a): $\tilde r$  is an integer. \rm 
We recall \emph{integrations  by parts} in $w$: for each $\Ci ^2$ function $\phi :\R ^d \to \R$ and a compactly supported $\Ci ^1$ function $h :\R ^d \to \R$ with 
$ \sum _{j=1}^d (\partial _j \phi )^2\neq 0$ on  the support of $h$, we set 
\begin{equation}\label{eq:hk}
h_k ^\phi (w) = \frac{i\partial _k\phi (w) \cdot h(w)}{ \sum _{j=1}^d (\partial _j \phi (w))^2} \quad k=1,\ldots ,d,
\end{equation}
belonging to $C^1_0(\R ^d)$.
Then we get
\begin{equation}\label{eq:integralbyparts}
\int e^{i\phi (w)} h(w) dw = -\sum _{k=1} ^d \int i\partial _k \phi (w) e^{i\phi (w)} \cdot h^\phi_k(w) dw  = \int  e^{i  \phi (w)} \cdot \sum _{k=1}^d \partial _k  h_k^\phi (w)   dw ,
\end{equation}
where $w=(w_k)_{k=1}^d \in \R^d$ and $\partial _k$ denotes partial differentiation with respect to $w_k$. 
We call this operation  \emph{integration  by parts for $h$ over $\phi$.} 
Define 
\begin{equation}\label{eq:revdef2}
\mathcal T_{\phi}h  =  \sum _{k=1}^d \partial _k h_k^\phi , \quad
\phi  _{( \xi ,\eta ,x,y)}(w) = (x-w)\xi +(T(w)-T(y))\eta .
\end{equation}
We simply write $\mathcal T_{ \xi ,\eta  }$ for $ \mathcal T_{\phi _{( \xi ,\eta ,x,y)} }$ (note that $ \mathcal T_{\phi _{( \xi ,\eta ,x,y)} }h$ does not depend on $x$ and $y$).
Then, we have
\begin{equation}\label{eq:rev14}
V^\ell _n(x,y) = \int e^{i\phi _{( \xi ,\eta ,x,y)}(w)}\mathcal T_{ \xi ,\eta  } ^{\tilde r}\tilde G(w) \psi _n(\xi ) \tilde \psi _\ell (\eta ) dwd\xi d\eta,
\end{equation}
by integrating \eqref{eq:revdef4} by parts for $\tilde G$ over $\phi _{( \xi ,\eta ,x,y)}$  $\tilde r$ times (that is possible by definition of $\tilde r$).

Put 
\[
 \tilde{ \mathcal G} _{n,\ell } (\xi ,\eta ,w )=  \mathcal G _{n,\ell } (2^n \xi ,2^\ell \eta ,w) \quad \text{with} \quad \mathcal G_{n,\ell } (\xi ,\eta ,w) = \mathcal T_{ \xi ,\eta  } ^{\tilde r _1}\tilde G (w) \psi _n (\xi )\tilde \psi _\ell (\eta ).
\]
Then, by  scaling we have
\[
\mathbb F^{-1} \mathcal G_{n,\ell } (u ,v ,w) =2^{dn +d\ell } \mathbb F^{-1} \tilde{ \mathcal G} _{n,\ell } (2^n u ,2^\ell v ,w),
\]
where $\mathbb F^{-1}$ is the inverse Fourier transform with respect to the variable $(\xi ,\eta )$.
Therefore, 
by \eqref{eq:revdef2} and \eqref{eq:rev14}, we have
\begin{align}\label{eq:rev18}
V^\ell _n(x,y) & = \int \mathbb F^{-1} \mathcal G_{n,\ell } (x-w, T(w) -T(y), w)dw\\
& = 2^{dn +d\ell }  \int \mathbb F^{-1} \tilde{ \mathcal G} _{n,\ell } (2^n (x-w) ,2^\ell (T(w) -T(y)) , w) dw.
\end{align}

Next we will see that  for any integers $k, k' \geq 0$, there is a constant $C_{k,k'} >0$  such that  
\begin{equation}\label{eq:re5}
\vert \mathbb F^{-1} \tilde{ \mathcal G} _{n,\ell } (u ,v ,w) \vert \leq \vert u \vert ^{-k} \vert v \vert ^{-k'} C_{k,k'} 2^{-\tilde r \max \{ n,\ell \}}
\end{equation}
for $(u ,v, w)\in \R^{3d}$.
This immediately follows if one can see that for any multi-indices $\alpha , \beta $, there is a constant $C_{\alpha ,\beta } >0$  such that  
\begin{equation}\label{eq:re3}
\vert \partial ^\alpha _\xi \partial _\eta ^\beta \tilde{ \mathcal G}_{n,\ell }(\xi  , \eta  , w ) \vert  \leq C_{\alpha ,\beta }2^{ -\tilde r\max \{n,\ell \}}
\end{equation}
for  $(\xi ,\eta ,w) \in \R^{3d}$, where  $\partial ^\alpha _\xi =\partial ^{\alpha _1}_{\xi _1} \partial ^{\alpha _2}_{\xi _2} \cdots \partial ^{\alpha _d}_{\xi _d}$ for
 $\alpha = (\alpha _1, \ldots ,\alpha _d) \in \{ 0,1, \ldots \} ^d$. 
(Just consider integration by parts with respect to $(\xi ,\eta)$ for
\[
\mathbb F^{-1} \tilde{\mathcal G}_{n,\ell }(u,v,w) = (2\pi )^{-2d} \int e^{i(u\xi +v\eta )}  \tilde{\mathcal G} _{n,\ell }(\xi, \eta ,w) d\xi d\eta,
\]
with noticing that the support of $\tilde{\mathcal G} _{n,\ell} (\cdot ,\cdot ,w)$ is included in $[-2, 2] ^2$.)  
Hence, we will show \eqref{eq:re3}.

We first note that, 
  by induction with respect to $\tilde r$,
\begin{equation}\label{eq:rev11b}
\mathcal T_{ \xi ,\eta  } ^{\tilde r }\tilde G (w) = 
\frac{\Theta  _{3\tilde r }  ( \xi ,\eta ,w)}{\vert \xi -DT^{tr}(w) \eta \vert ^{4 \tilde r} }, 
\end{equation}
with some function  $\Theta _{3 \tilde r }
\equiv \Theta  _{3 \tilde r  ,T,\tilde G}
$, which  is a polynomial function of degree $3 \tilde r $ in $\xi$ and $\eta$ whose coefficients are 
$\Ci ^0$
 in $w$.
Therefore, 
we have 
 (by induction again) that 
\[
\partial _\xi ^\alpha \partial _\eta ^\beta \mathcal T_{ \xi ,\eta  } ^{\tilde r }\tilde G (w) = 
\frac{\Theta ^{(1)} _{3\tilde r +\vert \alpha \vert + \vert \beta \vert }  ( \xi ,\eta ,w )}{\vert \xi -DT^{tr}(w) \eta \vert ^{4 \tilde r +2\vert \alpha \vert +2 \vert \beta \vert}}
\]
with some function  $\Theta ^{(1)} _{3 \tilde r +\vert \alpha \vert + \vert \beta \vert }$, 
 which  is a polynomial function of degree $3 \tilde r +\vert \alpha \vert +\vert \beta \vert$ in $\xi$ and $\eta$ whose coefficients are  $\Ci ^{0}$ in $w$,
 where $\vert \alpha \vert = \sum _j \alpha _j$  for 
each multi-index 
$\alpha = (\alpha _1, \ldots ,\alpha _d) \in \{ 0,1, \ldots \} ^d$. 
Notice also that by definition, there is an integer $N_1(T)>0$ such that  for each $ \xi \in \supp (\psi _n), \eta \in \supp (\tilde \psi _\ell )$ and $w \in \R ^d$,
\[
 \vert \xi -DT^{tr} (w) \eta \vert  \geq 2^{\max \{ n,\ell \} -N_1(T)} \quad \text{if $\ell \not \hookrightarrow n$}.
\]
Hence, 
for each multi-indices $\alpha , \beta$ and $( \xi ,\eta ,w) \in \mathbb R^{3d}$ satisfying $\psi _n(\xi)  \cdot \tilde \psi _\ell (\eta ) \neq 0$ with $\ell \not \hookrightarrow n$, we have
\begin{equation}\label{eq:rev11}
\vert \partial ^\alpha _\xi \partial _\eta ^\beta \mathcal T ^{\tilde r} _{\xi ,\eta }\tilde G (w) \vert \leq \tilde C_{\alpha ,\beta } 2^{-(\tilde r + \vert \alpha \vert  + \vert \beta \vert ) \max \{n,\ell \}}
\leq 
\tilde C_{\alpha ,\beta } 2^{-n \vert \alpha \vert -\ell \vert \beta \vert -\tilde r\max \{n,\ell \}}, 
\end{equation}
for some constant $\tilde C_{\alpha ,\beta } >0$.
Since $\Vert \partial _{\xi }^\alpha  \psi _n  \Vert _{L^\infty} \leq C_{\alpha }( \rho ) 2^{-n \vert \alpha \vert}$ with some constant $C_{\alpha }(\rho ) >0$ (recall \eqref{eq:0603b}), 
 it follows  that  for each multi-indices $\alpha , \beta$ and
 $( \xi ,\eta ,w) \in \mathbb R^{3d}$, 
\begin{equation}\label{eq:rev11c}
\vert \partial ^\alpha _\xi \partial _\eta ^\beta \mathcal G_{n,\ell } (w) \vert \leq C_{\alpha ,\beta } 2^{-n \vert \alpha \vert -\ell \vert \beta \vert -\tilde r\max \{n,\ell \}}, 
\end{equation}
for some constant $C_{\alpha ,\beta } >0$. 
%
%
%
Thus, we immediately obtain \eqref{eq:re3}. 

Combining \eqref{eq:rev18} and \eqref{eq:re5}, we get that $\vert V_n^\ell (x,y) \vert $ is bounded by
\begin{align}\label{eq:rev19}
C' 2^{dn+d\ell -\tilde r \max\{ n,\ell \}} \int _Kb(2^n(x-w)) b(2^\ell (T(w)-T(y)))dw,
\end{align}
with some constant $C'>0$ and $K=\supp \tilde G$.
If $\ell \leq n$, we have
\begin{align*}
2^{dn} \int _Kb (2^n (x-w)) b(2^\ell (T(w)-T(y)))dw \leq 2^{dn}\int b(2^n(x-w))dw =\int b(u) du
\end{align*}
by setting $u=2^n(x-w) $. 
Hence,  with $C'_1= C'\Vert b\Vert _{L^1}$, we get  
\[
\vert V_n^\ell (x,y) \vert \leq C_1' 2^{d\ell -\tilde r\max\{ n,\ell \}} \leq C_1' 2^{d\min \{ n,\ell \} -\tilde r\max\{n,\ell \}}.
\]
In the other case (when $\ell > n$), by setting $\tilde u=2^\ell (T(w) -T(y))$, in a similar manner one can see that 
$ \vert V_n^\ell (x,y) \vert \leq C_{2}' 2^{dn  -\tilde r\max\{ n,\ell \}} \leq C_{2}' 2^{d\min \{ n,\ell \} -\tilde r\max\{n,\ell \}}$
with some constant $C_{2}'\equiv C'_{2,T}>0$.

Recalling \eqref{eq:b}, the above estimates imply \eqref{eq:revkey} for $\vert x-y\vert \leq 2^{-\min \{n,\ell \}}$.
So, we now assume that $\vert x-y\vert > 2^{-\min \{n,\ell \}}$.
We consider the case $\ell \leq n$ (the other case can be dealt similarly, as above).  
Let $q_0=\lfloor -\log _2\vert x-y\vert \rfloor$,
so that either $\vert x-w\vert \geq 2^{-q_0-1}$ or $\vert w-y\vert \geq 2^{-q_0-1}$ holds for any $w\in \R ^d$.
Hence with the notation 
$K_1=\{ w\in K \mid \vert x-w\vert \geq 2^{-q_0-1}\} $ 
 and $K_2=\{ w\in K  \mid \vert w-y\vert \geq 2^{-q_0-1} \} $, noting that $w\in K_2$ implies $\vert T(w) -T(y)\vert \geq \tilde c 2^{-q_0-1}$,  we have the bound of $\int _K b (2^n (x-w)) b(2^\ell (T(w)-T(y)))dw $ by
\begin{align*}
&\int _{K_1} \vert 2^n (x-w)\vert ^{-d-1} b(2^\ell (T(w)-T(y)))dw \\
&\quad + \int _{K_2}b (2^n (x-w)) \vert \tilde c2^\ell (w-y ) \vert ^{-d-1}dw\\
 \leq  & C_{3}' (2^{-d\ell-(n-q_0-1) (d+1)} + 2^{-dn-(\ell -q_0-1) (d+1)}) \leq C_{4}' 2^{-dn}b(2^{-\ell } ( x-y))
\end{align*}
with some constants $C_{3}' \equiv C_{3,T,d}'>0$ and $C_{4}' \equiv C_{4,T,d}'>0$, which completes the proof of \eqref{eq:revkey} due to \eqref{eq:rev19}.


\bf Case (1-b): $\tilde r$  is not  an integer. \rm 
%
We recall \emph{regularised integration by parts} in $w$: for each $\Ci ^{1+\delta }$ function $\phi :\R ^d \to \R$ and a compactly supported $\Ci ^\delta$ function $h :\R ^d \to \R$, for $\delta \in (0,1)$, with 
$ \sum _{j=1}^d (\partial _j \phi )^2\neq 0$ on  the support of $h$, each $h_k^\phi $ given in \eqref{eq:hk} belongs to $C^\delta _0(\R ^d)$ for $k=1,\ldots ,d$. Let $h^\phi _{k,\epsilon} := h^\phi _k* v_\epsilon$ for $\epsilon >0$, where $v_\epsilon (x) = \epsilon ^{-d} v(x/\epsilon)$ with a  $\Ci ^\infty$ function   $v:\R ^d \to \R _+$  supported in the unit ball and satisfying $\int v (x) dx =1$.
There is a constant $C_v>0$, independent of $\phi$, such that for each $\Ci ^\delta $ function $u :\mathbb R ^d \to \mathbb R$ and each small $\epsilon >0$,
\begin{equation}\label{eq:04221}
\Vert \partial _k (u*v_\epsilon ) \Vert _{C^0} \leq C_v \epsilon ^{\delta -1} \Vert u \Vert _{C^\delta } , \quad \Vert u - u*v_\epsilon  \Vert _{C^0} \leq C_v  \epsilon ^{\delta} \Vert u \Vert _{C^\delta }.
\end{equation}
Finally, for every real number $L\geq 1$, it holds
\begin{multline}\label{eq:regularisedintegralbyparts}
\int e^{iL\phi (w)} h(w) dw = -\sum _{k=1} ^d \int i\partial _k \phi (w) e^{iL\phi (w)} \cdot h_k^\phi (w) dw \\
= \int  \frac{e^{i L \phi (w)}}{L} \cdot \sum _{k=1}^d \partial _k h_{k,\epsilon } ^\phi dw -\sum _{k=1} ^d \int i\partial _k \phi (w) e^{iL\phi (w)} \cdot (h_k^\phi (w)-h_{k,\epsilon} ^\phi (w)) dw.
\end{multline}
(Compare with \eqref{eq:integralbyparts}.) 
We call this operation  \emph{regularised integration by parts for $h$ over $\phi$}.
Define 
\begin{equation}\label{eq:04221b2}
\mathcal T_{\phi }^{( 0,L)}h = L^{-1}\sum _{k=1}^d \partial _k h_{k,L^{-1}}^\phi , \quad
\mathcal T_{\phi }^{( 1,L)}h =  -i\sum _{k=1}^d \partial _k \phi  \cdot (h_k^\phi -  h_{k,L^{-1}}^\phi ) ,
\end{equation}
and denote $\mathcal T_{\phi _{(\xi,\eta ,x,y)}}^{( 0,L)}$ and $\mathcal T_{\phi _{(\xi,\eta ,x,y)} }^{( 1,L)}$ by $\mathcal T_{\xi ,\eta }^{( 0,L)}$ and $\mathcal T_{\xi ,\eta }^{( 1,L)}$, respectively.
Then, by integration of \eqref{eq:revdef4} by parts for $\tilde{G}$ over $\phi _{(\xi ,\eta ,x,y)}$ $[\tilde r]$ times and regularised integration by parts over $\phi _{(\xi ,\eta ,x,y)}$  one time with $\delta =\tilde r -[\tilde r]$ and $\epsilon = L ^{-1}$  (note  that 
$\phi _{(\xi, \eta ,x, y) } (w) = L \phi _{(\epsilon \xi , \epsilon \eta ,x, y) } (w)$), 
 we get
\begin{multline}\label{eq:rev20}
V^\ell _n(x,y) = \int e^{i\phi _{( \xi ,\eta ,x,y)}(w)} \mathcal T_{\epsilon \xi ,\epsilon \eta  } ^{(0,L)}\mathcal T_{\xi ,\eta  } ^{[\tilde r]}\tilde{G}(w) \psi _n(\xi ) \tilde \psi _\ell (\eta ) dwd\xi d\eta\\
+ \int e^{i\phi _{( \xi ,\eta ,x,y)}(w)} \mathcal T_{\epsilon \xi , \epsilon \eta  } ^{(1,L)}\mathcal T_{\xi ,\eta  } ^{[\tilde r]}\tilde{G}(w) \psi _n(\xi ) \tilde \psi _\ell (\eta ) dwd\xi d\eta ,
\end{multline}
which we denote by $V_{n,\ell} ^{(0,L)}(x,y) +V_{n,\ell }^{(1,L)}(x,y)$.

It follows from \eqref{eq:04221} and \eqref{eq:04221b2} that  (with  $L^{-1} =\epsilon $)
\begin{align}\label{eq:rev41}
\vert  \partial _\xi ^\alpha \partial _\eta ^\beta \mathcal T_{\epsilon \xi ,\epsilon \eta  } ^{(0,L)}h(w) \vert 
& \leq \epsilon \sum _{k=1} ^d  \left\vert \partial _k\left(    (\partial _\xi ^\alpha \partial _\eta ^\beta h^{\phi _{(\epsilon \xi ,\epsilon \eta ,x,y)}} _k) * v_\epsilon  \right)(w) \right\vert \\ 
&\leq C_v  \epsilon ^\delta   \sum _{k=1 } ^d \Vert \partial _\xi ^\alpha \partial _\eta ^\beta h^{\phi _{(\epsilon \xi ,\epsilon \eta ,x,y)}} _k  \Vert _{ C^\delta }.
\end{align}
In a similar manner, we get the bound of $\vert  \partial _\xi ^\alpha \partial _\eta ^\beta \mathcal T_{\epsilon \xi ,\epsilon \eta  } ^{(1,L)}h(w) \vert $ by
\begin{equation}\label{eq:rev41b}
C_v \epsilon ^\delta   \sum _{k=1 } ^d \sum _{(\alpha ^{(1)}, \beta ^{(1)} ,\alpha ^{(2)},\beta ^{(2)}) }\Vert \partial _\xi ^{\alpha ^{(1)}} \partial _\eta ^{\beta ^{(1)}} h^{\phi _{(\epsilon \xi ,\epsilon \eta ,x,y)}} _k  \Vert _{ C^\delta }\Vert \partial _\xi ^{\alpha ^{(2)}} \partial _\eta ^{\beta ^{(2)}} (\partial _k \phi _{(\epsilon \xi ,\epsilon \eta ,x,y)} )\Vert _{C^0},
\end{equation}
where the second summation is taken over multi-indices $(\alpha ^{(1)}, \beta ^{(1)} ,\alpha ^{(2)},\beta ^{(2)}) \in \{ 0,1, \ldots \} ^{4d}$ such that $\alpha ^{(1)} + \alpha ^{(2)} = \alpha$ and  $\beta ^{(1)} + \beta ^{(2)} =\beta$.
On the other hand,
\begin{equation}\label{eq:rev42}
\partial _k \phi _{(\epsilon \xi ,\epsilon \eta ,x,y)} = \epsilon \hat{\Theta } _1(w, \xi ,\eta), \quad h_k ^{\phi _{(\epsilon \xi ,\epsilon \eta ,x,y)}} (w) =  \frac{i\epsilon ^{-1} \hat{\Theta }_1 (w, \xi ,\eta ) h(w)}{\vert \xi -DT^{tr} (w) \eta \vert ^2} 
\end{equation}
where $\hat{\Theta }_1(w, \xi ,\eta ) $ is the $k$-th element of $-\xi +DT^{tr} (w) \eta$, which is a polynomial function of degree $1$ in $\xi$ and $\eta$
 whose coefficients are $\Ci ^{r-1}$ (at least $\Ci ^{\tilde r}$) 
 in $w$.
Therefore, 
applying \eqref{eq:rev41} and \eqref{eq:rev42} for $h= \mathcal T_{\xi ,\eta  } ^{[\tilde r]}\tilde{G}$, together with \eqref{eq:rev11}, 
we get
\[
\vert  \partial _\xi ^\alpha \partial _\eta ^\beta \mathcal T_{\epsilon \xi ,\epsilon \eta  } ^{(j,L)}\mathcal T_{\xi ,\eta  } ^{[\tilde r]}\tilde{G}(w) \vert \leq C_v\tilde C_{\alpha ,\beta } \epsilon ^{\delta } 2^{-n \vert \alpha \vert - \ell \vert \beta \vert - [\tilde r ]  \max \{n,\ell \}} \quad (j=0,1)
\]
for any $\xi \in \supp \psi _n$ and $\eta \in \supp \tilde{\psi } _\ell$ with $\ell \not \hookrightarrow n$, implying that, with $\epsilon =2^{-\max \{ n,\ell \}}$ and $\delta = \tilde r -[\tilde r]$, 
\[
\vert  \partial _\xi ^\alpha \partial _\eta ^\beta \mathcal T_{\epsilon \xi ,\epsilon \eta  } ^{(j,L)}\mathcal T_{\xi ,\eta  } ^{[\tilde r]}\tilde{G}(w) \vert \leq C_v\tilde C_{\alpha ,\beta } 2^{-n \vert \alpha \vert - \ell \vert \beta \vert - \tilde r \max \{n,\ell \}}\quad (j=0,1).
\]
%
Therefore, in a  manner similar to one in the case (1-a) (replacing $V_n^\ell $ with $V_{n,\ell} ^{(0,L)}$ and  $V_{n,\ell }^{(1,L)}$),  one can get \eqref{eq:revkey}. 

\bf Step 2: \rm Next we show \eqref{eq:revkey2}, according to whether $\tilde r$ is an integer.


\bf Case (2-a): $\tilde r$ is an integer. \rm
Recall $V^\ell _n$ from \eqref{eq:revdef4} and $b_m$ from \eqref{eq:revdef5}.
For each $(\xi, \eta ,x, y)$, define 
\begin{equation}\label{eq:revgoal1}
 \tilde \phi _{(\xi , \eta ,x,y)} (w) = (x-w)\xi + DT(y) (w-y) \eta ,\quad \mathcal R _{\phi _1, \phi _2}h = e^{i(\phi _1 - \phi _2)} h ,
\end{equation}
for continuous functions $\phi _1, \phi _2 ,h :\R ^d\to \R$.
We simply write $\mathcal R _0$ and $\mathcal R _0^{-1}$ for $\mathcal R_{\phi _1,\phi _2}$ and $\mathcal R_{\phi _2,\phi _1}$, respectively,  with $\phi _1= \phi _{(\xi , \eta ,x,y)}$ and $\phi _2=\tilde \phi _{(\xi , \eta ,x,y)}$.
Denote $\mathcal T_{\tilde \phi _{(\xi ,\eta ,x,y)}}$, $\mathcal T^{(0,L)}_{\tilde \phi _{(\xi ,\eta ,x,y)}}$ and $\mathcal T^{(1,L)}_{\tilde \phi _{(\xi ,\eta ,x,y)}}$ by $\tilde{ \mathcal T}_{\xi ,\eta ,y}$, $\tilde{ \mathcal T}^{(0,L)}_{\xi ,\eta ,y}$ and $\tilde{ \mathcal T}^{(1,L)}_{\xi ,\eta ,y}$, respectively.
Then   \eqref{eq:revdef4} can be rewritten  as 
\begin{equation}\label{eq:rev21}
V^\ell _n(x,y) =\int e^{i\tilde \phi _{(\xi , \eta ,x,y)} (w)} \mathcal R _0\tilde{G}(w) \psi _n(\xi ) \tilde \psi _\ell (\eta )dwd\xi d\eta.
\end{equation}
Note that  $\tilde \phi _{(\xi, \eta ,x, y)}:\R ^d \to \R$ is  of class $\Ci ^\infty$ and $\mathcal R_0\tilde{G}$ is $\Ci ^{\tilde r}$ because $\tilde{r}<r$,
so that
$
\tilde{\mathcal T}_{\xi, \eta , y}  \mathcal R_0 \tilde{G}: \R^d \to \R
$ 
 is well-defined  and of class
 $\Ci ^{\tilde r-1}$. 

Integrating \eqref{eq:rev21} by parts once on $w$ (for $\mathcal R_0\tilde{G}$ over $\tilde \phi _{(\xi ,\eta ,x,y)}$), we obtain
\begin{align}\label{eq:rev22}
V^\ell _n(x,y) &= \int  e^{i\tilde \phi _{(\xi , \eta ,x,y)} (w)}\tilde{\mathcal T}_{\xi, \eta , y}  \mathcal R_0 \tilde{G} (w)\psi _n(\xi ) \tilde \psi _\ell (\eta )dwd\xi d\eta\\
&= \int  e^{i\phi _{(\xi , \eta ,x,y)} (w)}\mathcal R_0^{-1}\tilde{\mathcal T}_{\xi, \eta , y}  \mathcal R_0 \tilde{G} (w) \psi _n(\xi ) \tilde \psi _\ell (\eta )dwd\xi d\eta.
\end{align}
Integrating \eqref{eq:rev22} by parts $\tilde r -1$ times  on $w$, we obtain
\[
V^\ell _n(x,y) = \int  e^{i\phi _{(\xi , \eta ,x,y)} (w)}\mathcal T ^{\tilde r -1} _{\xi ,\eta  } \mathcal R_0^{-1}\tilde{\mathcal T}_{\xi, \eta , y}  \mathcal R_0 \tilde{G} (w) \psi _n(\xi ) \tilde \psi _\ell (\eta )dwd\xi d\eta.
\]
In a  manner similar to the case (1-a), it can be shown that
\begin{equation}\label{eq:rev24}
\left\Vert \partial ^\alpha _\xi \partial _\eta ^\beta \mathcal T ^{\tilde r -1} _{\xi ,\eta  } \mathcal R_0^{-1}\tilde{\mathcal T}_{\xi, \eta , y}  \mathcal R_0 \tilde{G} \right\Vert _{L^\infty } \leq \tilde{C} _{\alpha ,\beta } 2^\ell 2^{-n \vert \alpha \vert -\ell \vert \beta \vert -\tilde r\max \{n,\ell \}},
\end{equation}
because one can have, by induction, that
\begin{align}\label{eq:rev11bb}
\mathcal T ^{\tilde r -1} _{\xi ,\eta  } \mathcal R_0^{-1}\tilde{\mathcal T}_{\xi, \eta , y}  \mathcal R_0 \tilde{G}
=
\frac{\Theta  ^{(2)}_{3(\tilde r-1)+1 }  ( \xi ,\eta ,w) + \Theta  ^{(3)}_{3(\tilde r-1)+1 }  ( \xi ,\eta ,w) \eta }{\vert \xi -DT^{tr}(w) \eta \vert ^{4 (\tilde r-1)}\vert \xi -DT^{tr}(y)\eta\vert^2 } , 
\end{align}
with some functions  $\Theta _{3 (\tilde r-1)+1 }^{(2)}$ and $\Theta _{3 (\tilde r-1)+1 }^{(3)}$, which  are  polynomial functions of degree $3 (\tilde r-1)+1 $ in $\xi$ and $\eta$
whose coefficients are  $\Ci ^0$. 

We considered $\mathcal T ^{\tilde r -1} _{\xi ,\eta  } \mathcal R_0^{-1}\tilde{\mathcal T}_{\xi, \eta , y}  \mathcal R_0$ instead of $\mathcal T^{\tilde r} _{\xi ,\eta}$ since $\mathcal T^{\tilde r} _{\xi ,\eta}$ is not well-defined when $\tilde r>r-1$.
The price we have to pay for this change  is the factor $\eta $ in \eqref{eq:rev11bb}, resulting in $2^\ell $ in \eqref{eq:rev24}. By definition of $\ell\not\hookrightarrow n$, $2^\ell$ is dominated  by $2^{\min\{n, \ell\}}$ multiplied by a constant.
Consequently, as in the case (1-a), one can see that  \eqref{eq:rev24} implies \eqref{eq:revkey2} .

\bf Case (2-b): $\tilde r$ is not an integer. \rm
%
We perform single integration by parts of \eqref{eq:revdef4} for $\tilde{G}$ over $\tilde{\phi} _{(\xi ,\eta ,x,y)}$, $[\tilde{r}]-1$ integration by parts over $\phi$, and a single regularised integration by parts over $\tilde \phi _{(\xi ,\eta ,x,y)}$ with $\delta =\tilde r-[\tilde r]$, resulting in
\begin{multline*}
V^\ell _n (x,y)
= \int e^{i\tilde \phi _{\xi, \eta , x, y}(w)} \tilde{\mathcal T}^{(0,L)}_{\xi /L, \eta /L ,y}\mathcal R_0\mathcal T^{[\tilde r]-1}_{\xi , \eta}\mathcal R_0^{-1} \tilde{\mathcal T}_{\xi , \eta  ,y}\mathcal R_0\tilde G(w) \psi _n(\xi ) \tilde{\psi }_\ell (\eta ) dwd\xi d\eta\\
+ \int e^{i\tilde \phi _{\xi, \eta , x, y}(w)} \tilde{\mathcal T}^{(1,L)}_{\xi /L, \eta /L ,y}\mathcal R_0\mathcal T^{[\tilde r]-1}_{\xi , \eta}\mathcal R_0^{-1} \tilde{\mathcal T}_{\xi , \eta  ,y}\mathcal R_0\tilde G(w) \psi _n(\xi ) \tilde{\psi }_\ell (\eta ) dwd\xi d\eta .
\end{multline*}
Then in view of the cases (1-b) and (2-a), we obtain \eqref{eq:revkey2}.
\end{proof}

\section*{Acknowledgements}
This work was partially supported by JSPS KAKENHI Grant Numbers 16J03963 and 17K05283. The second author shows his deep gratitude to the members of Faculty of  Engineering in Kitami Institute of Technology for their warm hospitality when he visited there in August 2016 and October 2017.
The authors also would like to express their gratitude to Masato Tsujii  for many fruitful discussions. 
Finally, the authors are deeply grateful to an anonymous reviewer for many important suggestions,
all of which substantially improved the paper.

\bibliographystyle{plain}
\bibliography{MATHabrv,NS}

\end{document}